\numberwithin{equation}{section}
\def\<{\langle}
\def\>{\rangle}
\def\BB{{\mathcal B}}
\def\CC{{\mathcal C}}
\def\DD{{\mathcal D}}
\def\EE{{\mathcal E}}
\def\FF{{\mathcal F}}
\def\GG{{\mathcal G}}
\def\HH{{\mathcal H}}
\def\LL{{\mathcal L}}
\def\MM{{\mathcal M}}
\def\NN{{\mathcal N}}
\def\OO{{\mathcal O}}
\def\RR{{\mathcal R}}
\def\XX{{\mathcal X}}
\def\bbB{\mathbb{B}}
\def\bbC{\mathbb{C}}
\def\DDD{\mathfrak{D}}
\def\ddd{\mathfrak{d}}
\def\mmm{\mathfrak{m}}
\def\HHH{\mathfrak{H}}
\newcommand{\Span}{\mathop{\rm span}}
\newcommand{\HK}{\HHH(K)}
\newcommand{\bmB}{\bm{B}}
\newcommand{\tbB}{\tilde{\bm{B}}}
\newtheorem{lemma}{Lemma}[section]
\newtheorem{proposition}[lemma]{Proposition}
\newtheorem{theorem}[lemma]{Theorem}
\newtheorem{corollary}[lemma]{Corollary}
\theoremstyle{definition}
\newtheorem{remark}[lemma]{Remark}
\newtheorem{example}[lemma]{Example}
\title[Contractively included subspaces of Pick spaces]{Contractively included subspaces\\ of Pick spaces}
\author{Chafiq Benhida} 
\address{%
UFR de Math\'ematiques\\ Universit\'e des Sciences et Technologies
de Lille\\ F-59655 Villeneuve D'Ascq Cedex\\ France}
\email{Chafiq.Benhida@math.univ-lille1.fr}
\author{Dan Timotin}
\address{Institute of Mathematics of the Romanian Academy\\ P.O. Box 1-764\\ Bucharest 
014700\\ Romania}
\email{Dan.Timotin@imar.ro}
\thanks{The second author was partially supported by a grant of the Romanian National Authority for Scientific
Research, CNCS Ð UEFISCDI, project number PN-II-ID-PCE-2011-3-0119.}
\keywords{Contractive included subspace, Pick kernel, multipliers}
\subjclass{46E22, 47B32, 47A15}
\begin{document}

\begin{abstract}
Pick spaces are a class of reproducing kernel Hilbert spaces that generalize the classical Hardy space and the Drury--Arveson reproducing kernel spaces. We give characterizations of certain contractively included subspaces of Pick spaces. These generalize the characterization of closed invariant subspaces of Trent and McCullough, as well as results for the Drury--Arveson space obtained by Ball, Bolotnikov and Fang.
\end{abstract}

\maketitle

\section{Introduction}

An active area of research during the last decades has been the extension of function theory in classical spaces as the Hardy space and the Bergmann space  to other functional spaces. A first natural generalization is obtained by passing to several dimensions, that is, considering the domain of definition of the functions to be an open set in $\bbC^{n}$. This has lead to Hardy and Bergmann--type spaces in several variables (see, for instance,~\cite{rudin-polydisc-book1969,rudin-unitball-book1980}). 

Another point of view is to consider the functional spaces as reproducing kernel Hilbert spaces. 
An interesting space in this context is the so-called Drury--Arveson space~\cite{drury1983,arveson-subalgebrasiii-acta1998}. 
Viewed as a reproducing kernel space, the Drury--Arveson space is  the most important of a whole class of spaces whose reproducing kernel is characterized by a certain positivity condition. These \emph{Pick spaces} (alternately Nevanlinna--Pick or complete Pick)  have been introduced in~\cite{quiggin1993}; they contain several interesting spaces, as the Dirichlet--type spaces~\cite{shimorin-nevalinnapick-2002}, certain Sobolev spaces, etc. A basic reference that can be used is~\cite{agler-mccarthy-book}.

In particular, a Beurling type theorem for Pick spaces is proved in~\cite{mccullough-trent2000}, characterizing the closed subspaces of a Pick space that are invariant to multipliers. Actually,~\cite{mccullough-trent2000} discusses also vector-valued versions of the Pick spaces. As one should expect, a certain type of operator valued inner functions play the main role in the characterization.

Beurling's theorem and its generalizations refer to closed subspaces. A  natural sequel is the investigation of contractively included subspaces, which are no more necessarily close. Contractively included subspaces of Hilbert spaces have gained renewed interest
especially after the work of De Branges (see, for instance,~\cite{debrangesrovnyak-book1966}) referring to contractively included subspaces of the Hardy space.  A whole book of Sarason~\cite{sarason-subhardyspaces-book1994} has been later dedicated to this subject.

The  question of characterizing invariant contractively included subspaces of the Hardy space turns out to be more subtle. A good reference for this type of results is~\cite{nikolski-vasyunin1985}. The characterization has been extended to the Drury--Arveson space in~\cite{ball-bolotnikov-fang2007} (see also~\cite{ball-bolotnikov-fang2007-2,ball-bolotnikov-fang2008}), and  the passage to several dimensions brings to the surface some new phenomena. (It should  be noted  that in~\cite{ball-bolotnikov-fang2007} one discusses also the so--called noncommutative analogue of the Drury--Arveson space, the \emph{Fock space}, that has been much studied in papers of Popescu, starting with~\cite{popescugelu-indiana1989,popescugelu-jot1989, popescugelu-multianalyticfock-mathann1995}.)

The aim of this paper is to obtain analogues of the results of~\cite{ball-bolotnikov-fang2007} in the case of Pick spaces. Although~\cite{ball-bolotnikov-fang2007} is a source of inspiration, it should be noted that the general situation requires different arguments. One cannot anymore rely on the system theory interpretation which is the basis of much of the development in~\cite{ball-bolotnikov-fang2007}, nor can one use the universality property of the Drury--Arveson space in order to deduce the more general results. Also, one should note that the results in~\cite{ball-bolotnikov-fang2007,ball-bolotnikov-fang2007-2,ball-bolotnikov-fang2008} are given for a Drury--Arveson space with only a finite number of variables (although one might surmise that the argument can be extended to the infinite case). 
Moreover, some new phenomena appear in the general case that are worth mentioning. 

The plan of the paper is the following. We start with a section of preliminaries which introduces the required notions. The next section introduces our main characters, the Pick spaces. Section~\ref{se:invariant subspaces} deals with the analogue of Beurling's theorem; the main result is Theorem~\ref{th:invariant spaces}, which characterizes contractively included subspaces that are invariant to all multipliers. It is interesting to note that, contrary to the case of usual closed subspaces, this does not lead immediately to the characterization of their complementary subspaces (complementary subspaces are the generalization of orthogonal subspaces). Thus, in Section~\ref{se:complementary subspaces} we characterize these complementary subspaces in Theorem~\ref{th:characterization of complementary subspaces}. The final section discusses mostly some differences that appear between the Drury--Arveson space and the general Pick space.

\section{Preliminaries}\label{se:preliminaries}

\subsection{Contractively included subspaces}

Suppose $\HH, \HH'$ are Hilbert spaces, with norms denoted $\|\cdot\|$, $\|\cdot\|'$ respectively, and such that $\HH'\subset \HH$ as a vector subspace. If  the inclusion is a contraction; that is, $\|x\|_\HH\le \|x\|_{\HH'}$ for all $x\in \HH'$, we say that $\HH'$ is contractively included in $\HH$ and write $\HH'\Subset \HH$. 

It is often convenient to look at a contractively included subspace as the image of a contraction. More precisely, if $C:\CC\to \HH$, then we can define a complete Hilbert space norm on $C(\CC)$ by the formula
\begin{equation}\label{eq:definition of the range norm}
\|x\|_C:=\inf \{ \|y\|: Cy=x\}.
\end{equation}
We will denote this Hilbert space by $\RR_C$; we have $\RR_C\Subset \HH$. In case $C$ is injective, there is no need for the infimum, and $C$  maps $\CC$ unitarily onto $\RR_C$; otherwise, it maps $\ker C^\perp$ unitarily onto $\RR_C$. 

Of course, any $\HH'\Subset \HH$ is of the form $\RR_C$ if we take $C$ to be the injection of $\HH'$ into~$\HH$. 

A good reference for basic properties of contractively included subspaces is~\cite{nikolski-vasyunin1985}, Section 5. The next two lemmas gather the main facts that we will use; they are  consequences of Lemmas 5.7, 5.8 and 5.9 therein. We give the proof only for a point that is not explicit therein.

\begin{lemma}\label{le:same range space}
Suppose $C_1:\CC_1\to \HH$, $C_2:\CC_2\to \HH$ are contractions. Then $\RR_{C_1}=\RR_{C_2}$ (with equality of norms) if and only if $C_1C_1^*=C_2C_2^*$.
\end{lemma}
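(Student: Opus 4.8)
The plan is to prove the two implications separately, using the characterization of the range norm in~\eqref{eq:definition of the range norm} together with the factorization of positive operators.

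\medskip

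\emph{Sufficiency.} Suppose $C_1C_1^*=C_2C_2^*$. The key observation is that the operator $C$ and its "polar-type" part $(CC^*)^{1/2}$ define the same range space with the same norm. Indeed, if we set $Q:=(C_1C_1^*)^{1/2}=(C_2C_2^*)^{1/2}$, I claim $\RR_{C_1}=\RR_Q=\RR_{C_2}$ isometrically. To see $\RR_{C_1}=\RR_Q$, recall the standard fact (a form of Douglas's lemma / the Schur complement argument) that for a contraction $C_1:\CC_1\to\HH$ one has $C_1=Q V$ for a partial isometry $V:\CC_1\to\overline{\Image Q}$ with $V^*V$ the projection onto $\ker C_1^\perp$ and $VV^*=I$ on $\overline{\Image Q}$; this follows from $C_1C_1^*=Q^2=QQ^*$ by writing $V$ on $\Image Q^*$ via $V(Qy):=C_1^*$-adjoint bookkeeping, or more cleanly by quoting that $C_1C_1^*=QQ^*$ implies $\Image C_1=\Image Q$ with $\|x\|_{C_1}=\|x\|_Q$, which is exactly Lemmas 5.7--5.9 of~\cite{nikolski-vasyunin1985}. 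Hence $\RR_{C_1}$ depends on $C_1$ only through $C_1C_1^*$, and the same applies to $C_2$; since $C_1C_1^*=C_2C_2^*$ we get $\RR_{C_1}=\RR_{C_2}$ with equality of norms. Alternatively, and avoiding the partial isometry entirely: for $x\in\Image Q$ one checks directly from~\eqref{eq:definition of the range norm} that $\|x\|_{C_1}^2=\<(C_1C_1^*)^{[-1]}x,x\>$, where $(C_1C_1^*)^{[-1]}$ denotes the (possibly unbounded) inverse of $C_1C_1^*$ on its range; this closed-form expression manifestly depends only on $C_1C_1^*$, giving both the equality of the underlying sets and of the norms at once.

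\medskip

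\emph{Necessity.} Conversely, suppose $\RR_{C_1}=\RR_{C_2}$ with equality of norms. Let $J_i:\RR_{C_i}\to\HH$ be the (contractive) inclusion map; the hypothesis says $J_1=J_2=:J$ as an operator between the \emph{same} Hilbert space $\RR_{C_1}=\RR_{C_2}$ and $\HH$, so in particular $J_1J_1^*=J_2J_2^*$. It therefore suffices to show $C_iC_i^*=J_iJ_i^*$ for $i=1,2$. By construction $C_i$ maps $\ker C_i^\perp$ unitarily onto $\RR_{C_i}$, say via a unitary $U_i:\ker C_i^\perp\to\RR_{C_i}$, and under this identification $C_i=J_i U_i \oplus 0$ on $\CC_i=\ker C_i^\perp\oplus\ker C_i$. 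Then $C_iC_i^*=J_iU_iU_i^*J_i^*=J_iJ_i^*$, since $U_iU_i^*=I_{\RR_{C_i}}$. Combining, $C_1C_1^*=J_1J_1^*=J_2J_2^*=C_2C_2^*$.

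\medskip

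\emph{Main obstacle.} The routine direction is necessity; the slightly delicate point is sufficiency, specifically justifying that $\Image C_1=\Image Q$ as \emph{sets} (not merely that norms agree on a common dense subspace) and that the infimum in~\eqref{eq:definition of the range norm} is attained and computed by the closed-form expression above even when $C_1C_1^*$ is not boundedly invertible. This is precisely the content the authors attribute to Lemmas 5.7--5.9 of~\cite{nikolski-vasyunin1985}, so in the writeup I would simply invoke those, keeping the direct computation with $(C_1C_1^*)^{[-1]}$ as the conceptual explanation of why $C_1C_1^*$ alone determines $\RR_{C_1}$.
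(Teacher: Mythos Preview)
Your proposal is correct. The paper itself does not supply a proof of this lemma; it simply records that the statement is a consequence of Lemmas~5.7--5.9 in~\cite{nikolski-vasyunin1985}. What you have written is therefore not a different route so much as an unpacking of what those cited lemmas actually say.

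A couple of minor comments. Your necessity argument is the cleanest part: the factorization $C_i=J_iU_i\oplus 0$ with $U_i$ unitary from $(\ker C_i)^\perp$ onto $\RR_{C_i}$ immediately gives $C_iC_i^*=J_iJ_i^*$, and equality of the inclusions $J_1=J_2$ finishes it. For sufficiency, the polar-type factorization $C_1=QV$ with $Q=(C_1C_1^*)^{1/2}$ and $V$ a partial isometry is the right mechanism, and the two-line check that $\Image C_1=\Image Q$ and $\|x\|_{C_1}=\|x\|_Q$ goes through exactly as you indicate (using $Q=QVV^*=C_1V^*$ for the reverse inclusion). Your alternative closed-form $\|x\|_{C_1}^2=\langle (C_1C_1^*)^{[-1]}x,x\rangle$ is morally correct and is a useful heuristic for why only $C_1C_1^*$ matters, but making it rigorous when $C_1C_1^*$ has $0$ in its spectrum requires the same care about domains that the polar-decomposition argument handles automatically; I would lead with the factorization and keep the pseudoinverse formula as a remark.
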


\begin{lemma}\label{le:contraction and commutation on range spaces}
Suppose $C_i:\CC_i\to \HH_i$ for $i=1,2$ are contractions, 
\begin{enumerate}
 \item If $D\in \LL(\CC_1, \CC_2)$ is a contraction, then there exists a unique contraction $T:\RR_{C_1}\to \RR_{C_2}$ such that $TC_1=C_2D$.

\item If $T\in\LL(\HH_1, \HH_2)$, then the following are equivalent:

(i) $T(\RR_{C_1})\subset \RR_{C_2}$, and $T:\RR_{C_1}\to \RR_{C_2}$ is a contraction.

(ii) There exists a contraction $D\in \LL(\CC_1, \CC_2)$ such that $TC_1=C_2D$.

(iii) $TC_1C_1^*T^*\le C_2C_2^*$.
\end{enumerate}
\end{lemma}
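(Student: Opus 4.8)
The plan is to prove the two lemmas together, using the range-space machinery: the key observation is that a contraction $C:\CC\to\HH$ factors through the polar decomposition or, more simply, that $\RR_C$ depends only on the positive operator $CC^*$ via Lemma~\ref{le:same range space}, so everything can be phrased in terms of $CC^*$. For Lemma~\ref{le:contraction and commutation on range spaces}(1), I would first recall the elementary factorization fact (Douglas' lemma): if $A,B$ are operators with $AA^*\le BB^*$, then $A=BX$ for some contraction $X$, and moreover $X$ can be chosen with $\Image X\subseteq \overline{\Image B^*}$, in which case it is unique. Applying this with $A=C_2D$ and $B=C_2$ — which is legitimate because $C_2DD^*C_2^*\le C_2C_2^*$ since $D$ is a contraction — produces a contraction $T_0:\HH_2\to\HH_2$ with $T_0C_2=C_2D$; restricting appropriately gives the desired $T:\RR_{C_1}\to\RR_{C_2}$. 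Uniqueness follows because $C_1$ has dense range in $\RR_{C_1}$ (indeed $C_1$ maps onto $\RR_{C_1}$ by the definition of the range norm), so $TC_1=C_2D$ determines $T$ on a dense set.

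For part (2), I would prove the cycle of implications (i)$\Rightarrow$(iii)$\Rightarrow$(ii)$\Rightarrow$(i). The implication (ii)$\Rightarrow$(i) is essentially part (1): given the contraction $D$ with $TC_1=C_2D$, part (1) yields a contraction $\tilde T:\RR_{C_1}\to\RR_{C_2}$ with $\tilde TC_1=C_2D=TC_1$, and since $C_1$ is onto $\RR_{C_1}$ this forces $T|_{\RR_{C_1}}=\tilde T$, whence $T$ maps $\RR_{C_1}$ contractively into $\RR_{C_2}$. For (i)$\Rightarrow$(iii): if $T:\RR_{C_1}\to\RR_{C_2}$ is a contraction, then viewing $C_i$ as a coisometry from $\CC_i$ onto $\RR_{C_i}$ (on the orthocomplement of its kernel), one computes $C_1C_1^*=\iota_1\iota_1^*$ and $C_2C_2^*=\iota_2\iota_2^*$ where $\iota_i:\RR_{C_i}\to\HH_i$ is the (contractive) inclusion; then $TC_1C_1^*T^* = (\iota_2 T\iota_1^{-1})\cdots$ — more carefully, $C_1C_1^*$ equals the operator $\iota_1\iota_1^*$ and $T\iota_1^*=\iota_2 T^{(r)}$ where $T^{(r)}$ is the restriction, so $TC_1C_1^*T^*=\iota_2 T^{(r)}(T^{(r)})^*\iota_2^*\le \iota_2\iota_2^*=C_2C_2^*$ using $\|T^{(r)}\|\le1$. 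For (iii)$\Rightarrow$(ii): this is again Douglas' lemma applied to $A=TC_1$ and $B=C_2$: the inequality $TC_1C_1^*T^*\le C_2C_2^*$ gives a contraction $D$ with $TC_1=C_2D$.

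The main obstacle, and the point the authors flag as "not explicit" in~\cite{nikolski-vasyunin1985}, is likely the uniqueness assertion in part (1) and the careful identification in (i)$\Rightarrow$(iii) of $C_iC_i^*$ with the inclusion-composed-with-its-adjoint of $\RR_{C_i}$ into $\HH_i$; both hinge on the facts that $C$ maps $\HH\ominus\ker C$ unitarily onto $\RR_C$ and that the inclusion $\RR_C\hookrightarrow\HH$ has adjoint which, composed with $C$'s co-isometric part, recovers $CC^*$. I would isolate this as a preliminary computation: for any contraction $C:\CC\to\HH$, writing $\iota_C:\RR_C\to\HH$ for the inclusion and $V_C:\ker C^\perp\to\RR_C$ for the induced unitary, one has $C=\iota_C V_C(\,\cdot\,|_{\ker C^\perp})$ and hence $CC^*=\iota_C V_C V_C^*\iota_C^*=\iota_C\iota_C^*$. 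Once this identity is in hand, all four equivalences reduce to Douglas' lemma plus the density of $\Image C_1$ in $\RR_{C_1}$, and no further delicate analysis is needed.
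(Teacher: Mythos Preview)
Your treatment of part~(2) is sound: the cycle (i)$\Rightarrow$(iii)$\Rightarrow$(ii)$\Rightarrow$(i) via Douglas' lemma together with the identity $C_iC_i^*=\iota_i\iota_i^*$ is correct and standard. The paper does not prove part~(2) at all, deferring instead to~\cite{nikolski-vasyunin1985}; the only thing it argues is part~(1).

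The gap is in your argument for part~(1). Your Douglas application with $A=C_2D$ and $B=C_2$ and the inequality $AA^*\le BB^*$ yields a contraction $X:\CC_1\to\CC_2$ with $C_2D=C_2X$, not an operator $T_0:\HH_2\to\HH_2$ with $T_0C_2=C_2D$. The latter equation does not even typecheck, since $T_0C_2$ has domain $\CC_2$ while $C_2D$ has domain $\CC_1$. More generally, Douglas' lemma factors $A$ through the \emph{range} of $B$, i.e.\ produces a right factor; it cannot manufacture a left factor $T$ in $TC_1=C_2D$ from an inequality of the form $C_2DD^*C_2^*\le C_2C_2^*$, which says nothing about $C_1$.

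The paper's proof of~(1) is direct and avoids Douglas entirely: for $x\in\RR_{C_1}$ and any $y$ with $C_1y=x$, the definition of the range norm gives
\[
\|C_2Dy\|_{\RR_{C_2}}\le\|Dy\|_{\CC_2}\le\|y\|_{\CC_1};
\]
taking the infimum over such $y$ yields $\|C_2Dy\|_{\RR_{C_2}}\le\|x\|_{\RR_{C_1}}$, so $Tx:=C_2Dy$ defines a contraction satisfying $TC_1=C_2D$. Your uniqueness remark (that $C_1$ is onto $\RR_{C_1}$, so $T$ is determined) is correct and completes the argument. The identity $CC^*=\iota_C\iota_C^*$ you isolate is the right tool for part~(2), but for part~(1) the two-line range-norm estimate is both simpler and what the paper actually does.
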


\begin{proof}
 We will prove only (1). Take $x\in \RR_{C_1}$. Suppose $y\in \CC_1$ and $C_1y=x$. We have then 
\[
 \|C_2Dy\|_{\RR_{C_2}}=\inf \{ \|z\|: z\in\CC_2,\, C_2z=C_2Dy \}\le \|Dy\| \le \|y\|.
\]
By taking the infimum with respect to all $y\in \CC_1$ such that $C_1y=x$, we obtain
\[
 \|C_2Dy\|_{\RR_{C_2}}\le \|x\|_{\RR_{C_1}}.
\]
Therefore, defining $Tx=C_2Dy$ yields a contraction that satisfies the $TC_1=C_2D$.
\end{proof}

A basic notion for contractively included subspaces is that of complementary subspace (see \cite{nikolski-vasyunin1985, sarason-subhardyspaces-book1994}). If $\HH'\Subset \HH$, then there exists a unique Hilbert space $\HH''\Subset \HH$ with the properties

(a) $\|x'+x''\|^2_\HH\le \|x'\|^2_{\HH'} + \|x''\|^2_{\HH''}$ for any $x'\in \HH'$, $x''\in \HH''$;

(b) for any $x\in \HH$ there exists a unique decomposition $x=x'+x''$, $x'\in \HH'$, $x''\in \HH''$, with $\|x\|^2_\HH= \|x'\|^2_{\HH'} + \|x''\|^2_{\HH''}$.

The space $\HH''$ is called the \emph{complementary subspace} of $\HH'$ and is denoted by $\HH'{}^\sharp$. The following lemma can be found in \cite{nikolski-vasyunin1985, sarason-subhardyspaces-book1994}.

\begin{lemma}\label{le:complementary space as image of defect}
If $\HH'=\RR_C$, then $\HH'{}^\sharp=\RR_{(I-CC^*)^{1/2}}$.
\end{lemma}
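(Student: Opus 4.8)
The plan is to verify that $\HH''=\RR_{(I-CC^*)^{1/2}}$ satisfies properties (a) and (b) of the complementary subspace, using the characterization of range norms through Lemma~\ref{le:same range space} and Lemma~\ref{le:contraction and commutation on range spaces}. Write $D:=(I-CC^*)^{1/2}$, so that $CC^*+DD^*=CC^*+D^2=I$. First I would reduce to the case where $C$ is injective: if $C:\CC\to\HH$ is arbitrary, replacing $\CC$ by $(\ker C)^\perp$ changes neither $\RR_C$ nor $CC^*$ (by Lemma~\ref{le:same range space}), so we may assume $C$ maps $\CC$ unitarily onto $\RR_C$; similarly $D$ may be taken with $(\ker D)^\perp$ as domain. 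Then the model operator to keep in mind is the map $V:\HH\to\CC\oplus\HH$ given by $Vx=(C^*x, Dx)$ — using $CC^*+D^2=I$ one checks $V$ is an isometry — whose components realize $\HH'$ and $\HH''$ as the two coordinate range spaces.

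The key computation for (a) is the following. Given $x'\in\RR_C$ and $x''\in\RR_D$, write $x'=Cy$, $x''=Dz$ with $y\in(\ker C)^\perp$, $z\in(\ker D)^\perp$, so that $\|x'\|_{\RR_C}=\|y\|$ and $\|x''\|_{\RR_D}=\|z\|$. Then
\[
\|x'+x''\|_\HH = \|Cy+Dz\|_\HH = \left\| \begin{bmatrix} C & D\end{bmatrix}\begin{bmatrix} y\\ z\end{bmatrix}\right\|_\HH \le \left\|\begin{bmatrix} C & D\end{bmatrix}\right\|\, \left\|\begin{bmatrix} y\\ z\end{bmatrix}\right\|,
\]
and $\begin{bmatrix} C & D\end{bmatrix}\begin{bmatrix} C & D\end{bmatrix}^* = CC^*+DD^* = I$ shows that $\begin{bmatrix} C & D\end{bmatrix}$ is a coisometry, hence of norm $1$. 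Therefore $\|x'+x''\|_\HH^2 \le \|y\|^2+\|z\|^2 = \|x'\|_{\RR_C}^2+\|x''\|_{\RR_D}^2$, which is exactly (a).

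For (b) I would argue as follows. Given $x\in\HH$, set $x':=CC^*x$ and $x'':=DD^*x$; then $x'+x''=(CC^*+DD^*)x=x$, and $x'=C(C^*x)\in\RR_C$, $x''=D(D^*x)\in\RR_D$. To see that this decomposition is norm-preserving, note that $C^*x\in(\ker C)^\perp$ (since $\ker C^*=(\Image C)^\perp$ is harmless here — rather, $C^*x\perp\ker C$ automatically because $\Image C^*\subseteq\overline{\Image C^*}=(\ker C)^\perp$), so $\|x'\|_{\RR_C}=\|C^*x\|$; likewise $\|x''\|_{\RR_D}=\|D^*x\|$, and hence $\|x'\|_{\RR_C}^2+\|x''\|_{\RR_D}^2 = \|C^*x\|^2+\|D^*x\|^2 = \langle (CC^*+DD^*)x,x\rangle = \|x\|^2$. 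Uniqueness of the decomposition then follows formally from (a): if $x=x_1'+x_1''$ is any other decomposition with $x_1'\in\RR_C$, $x_1''\in\RR_D$, then for $0<t<1$ one has $x = (x'+t(x_1'-x'))+(x''+t(x_1''-x''))$ is again such a decomposition, and applying (a) together with the parallelogram-type inequality for the already-established equality case forces $x_1'=x'$, $x_1''=x''$ — this is the standard argument for complemented spaces, found in~\cite{nikolski-vasyunin1985, sarason-subhardyspaces-book1994}. Finally, uniqueness of the complementary space itself is part of its definition, so establishing (a) and (b) identifies $\HH'{}^\sharp$ with $\RR_D$.

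The main obstacle is the bookkeeping around $\ker C$ and $\ker D$: one must be careful that $\|Cy\|_{\RR_C}=\|y\|$ requires $y\perp\ker C$, and that the elements $C^*x$, $D^*x$ produced in part (b) indeed lie in the correct subspaces so that their range-norms equal their $\CC$- and $\HH$-norms. Once the reduction to the injective case is made cleanly (or, equivalently, once one consistently uses $\|Cy\|_{\RR_C}=\inf\{\|w\|:Cw=Cy\}$ and the fact that this infimum is attained at the projection of $y$ onto $(\ker C)^\perp$), the rest is the coisometry computation above and the standard uniqueness argument, neither of which presents real difficulty.
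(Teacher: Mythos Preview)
The paper does not actually prove this lemma; it merely records it with a pointer to \cite{nikolski-vasyunin1985, sarason-subhardyspaces-book1994}. Your argument is correct and is essentially the standard one found in those references: the coisometry identity $CC^*+D^2=I$ for the row operator $\begin{pmatrix}C & D\end{pmatrix}$ immediately yields inequality~(a), and the explicit splitting $x=CC^*x+D^2x$ together with $C^*x\in(\ker C)^\perp$, $Dx\in(\ker D)^\perp$ gives the equality case in~(b). Your uniqueness sketch is a bit elliptic---to make it precise, note that if $u:=x_1'-x'=-(x_1''-x'')$ then the quadratic
\[
f(t)=\|x'+tu\|_{\RR_C}^2+\|x''-tu\|_{\RR_D}^2-\|x\|_\HH^2
\]
is nonnegative by~(a), vanishes at $t=0$ and $t=1$ by the two equality hypotheses, and has leading coefficient $\|u\|_{\RR_C}^2+\|u\|_{\RR_D}^2\ge0$; hence $f\equiv0$ and $u=0$---but you correctly flag this as routine and defer to the literature. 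The reduction to injective $C$ at the start is harmless but unnecessary, since you later use only that $C^*x\in(\ker C)^\perp$, which holds regardless.
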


\subsection{Kernels and multipliers}

If $K$ is a positive definite kernel on a set $\Lambda$, we denote by $\HHH(K)$ the reproducing kernel space with kernel $K$. If we define $k_\lambda(\mu)=K(\mu, \lambda)$, then we have $\<f, k_\lambda\>= f(\lambda)$ for all $f\in \HHH(K)$.

Besides scalar valued reproducing kernels and corresponding reproducing kernel spaces, we will also have the opportunity to consider operator valued kernels; that is, taking values in $\LL(\EE)$ for some Hilbert space $\EE$. Such a kernel $K$ is called positive definite if for any choice of vectors $\xi_i\in\EE$ we have $\sum_{i, j} \<K(\lambda_i, \lambda_j)\xi_i, \xi_j\>\ge 0$.

In particular, we may consider the tensor Hilbert space product $\HK\otimes\GG$ for some Hilbert space $\GG$. If $F\in \HK\otimes \GG$, then we can define the value $F(\lambda)\in \GG$ by the formula
\[
\<F(\lambda) ,\xi \>= \< F,k_\lambda\otimes\xi \>.
\]
It is easily seen that this formula gives for simple tensors $f\otimes \xi$ the value $f(\lambda)\xi$. So elements in $\HK\otimes \GG$ can be viewed as functions on $\Lambda$ with values in $\GG$.

A simple example of positive definite operator valued kernel on $\EE$ can be obtained as follows. Take a function  $G:\Lambda\to\LL(\GG', \EE)$, where $\GG'$ is an arbitrary Hilbert space. The kernel 
\begin{equation}\label{eq:projecting the kernel}
 L_G(\mu, \lambda)=G(\mu)G(\lambda)^*
\end{equation}
 is positive definite.

Suppose one is given now a function $G:\Lambda\to\LL(\GG', \GG)$ for some Hilbert spaces $\GG, \GG'$. We are interested when such functions generate contractive multipliers from $\GG'\otimes\HK$ to $\GG\otimes\HK$. A total family in the latter space is given by the set $\xi\otimes k_\lambda$, with $\xi\in \GG$ and $\lambda\in \Lambda$. We say that $G$ is a \emph{multiplier}  if the operator densely defined by $\mmm_G^*(\xi\otimes  k_\lambda) = G(\lambda)^* \xi\otimes k_\lambda$ can be extended to a bounded operator; $G$ is a \emph{contractive multiplier} if it can be extended to a contraction. (Note that we have actually defined  $\mmm_G$ through its adjoint.)

If $G$ is a scalar multiplier (that is, $\GG=\GG'=\bbC$), then we will write $M_G$ (acting on $\HK$) instead of $\mmm_G$.

\begin{lemma}\label{le:kernel condition for contractive multipliers}
 With the above notations, $G$ is a contractive multiplier iff the operator valued kernel
\[
 (I_{\GG}- G(\mu)G(\lambda)^*)K(\mu, \lambda)
\]
is positive definite.
\end{lemma}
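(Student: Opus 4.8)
The plan is to reduce the statement to a positive‑definiteness computation on the total family $\{\xi\otimes k_\lambda\}$ and then recognize the resulting kernel. First I would recall that an operator $T$ on a Hilbert space with a total set $\{e_\alpha\}$ is a contraction if and only if $\langle e_\alpha,e_\beta\rangle - \langle Te_\alpha,Te_\beta\rangle$ is a positive definite kernel in $\alpha,\beta$ (this is just the statement that $I-T^*T\ge 0$, tested on finite linear combinations of the $e_\alpha$). We apply this to $T=\mmm_G^*$ and the total family $e_{(\xi,\lambda)}=\xi\otimes k_\lambda$ in $\GG\otimes\HK$, with $\xi$ ranging over $\GG$ and $\lambda$ over $\Lambda$. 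Strictly speaking $\mmm_G^*$ is only densely defined a priori, so I would phrase the argument as: the formula $\mmm_G^*(\xi\otimes k_\lambda)=G(\lambda)^*\xi\otimes k_\lambda$ extends to a contraction on $\GG\otimes\HK$ if and only if the indicated Gram‑type kernel is positive definite, the ``only if'' being immediate and the ``if'' being the standard construction of a bounded operator from a positive definite defect form on a total set.

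The main computation is then to evaluate the two inner products. On one hand,
\[
\langle \xi\otimes k_\lambda,\ \eta\otimes k_\mu\rangle_{\GG\otimes\HK}
=\langle\xi,\eta\rangle_\GG\,\langle k_\lambda,k_\mu\rangle_{\HK}
=\langle\xi,\eta\rangle_\GG\, K(\mu,\lambda),
\]
using $\langle k_\lambda, k_\mu\rangle = \overline{K(\lambda,\mu)}=K(\mu,\lambda)$ for a (Hermitian) positive definite kernel. On the other hand,
\[
\langle \mmm_G^*(\xi\otimes k_\lambda),\ \mmm_G^*(\eta\otimes k_\mu)\rangle
=\langle G(\lambda)^*\xi\otimes k_\lambda,\ G(\mu)^*\eta\otimes k_\mu\rangle
=\langle G(\mu)G(\lambda)^*\xi,\eta\rangle_\GG\, K(\mu,\lambda).
\]
Subtracting, the defect kernel is exactly
\[
\big\langle \big(I_\GG-G(\mu)G(\lambda)^*\big)\xi,\eta\big\rangle_\GG\, K(\mu,\lambda),
\]
which by definition (the scalarization convention for operator valued kernels recalled just before Lemma~\ref{le:kernel condition for contractive multipliers}) is positive definite precisely when the operator valued kernel $(I_\GG-G(\mu)G(\lambda)^*)K(\mu,\lambda)$ is. Since $G$ is a contractive multiplier iff $\mmm_G^*$ extends to a contraction iff $\mmm_G$ extends to a contraction, combining the two displays yields the equivalence.

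I do not expect a serious obstacle here; the only point requiring a little care is the passage from ``densely defined by the formula'' to ``extends to a contraction'', i.e.\ making sure the positive‑definiteness of the defect kernel on the total family really does produce a well‑defined bounded operator and not merely a quadratic form. This is handled by the usual argument: positive definiteness of $\langle e_\alpha,e_\beta\rangle-\langle Te_\alpha,Te_\beta\rangle$ forces $\|\sum c_\alpha Te_\alpha\|\le\|\sum c_\alpha e_\alpha\|$, so $T$ is well defined on the dense span and extends by continuity. One should also note that $\HK$ and its operator‑valued analogues are genuine reproducing kernel spaces so that $\{k_\lambda\}$, and hence $\{\xi\otimes k_\lambda\}$, is total — this is already built into the setup of the section. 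With these remarks the proof is a one‑line Gram matrix identity.
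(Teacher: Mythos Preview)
Your proposal is correct and is exactly the approach the paper indicates: the paper's proof is the single sentence ``the proof is obtained by writing the action of $\mmm_G^*$ on a linear combination from the family of total vectors on which it is originally defined,'' and your computation of the defect $\langle(I-\mmm_G\mmm_G^*)(\xi\otimes k_\lambda),\eta\otimes k_\mu\rangle$ is precisely that. You have simply made explicit what the paper leaves to the reader, including the well-definedness of the extension from the dense span.
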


\begin{proof}
 The proof is obtained by writing the action of $\mmm_G^*$ on a linear combination from the family of total vectors on which it is originally defined.
\end{proof}

We will have the opportunity to consider triple tensor product spaces with one of the factors being $\HK$. It will be convenient in this case to write $\HK$ in the middle.

Suppose that we have two multipliers $G:\Lambda\to \LL(\GG',\GG)$, $F:\Lambda\to \LL(\FF',\FF)$. 
A total set in the tensor product $\GG\otimes \HK\otimes \FF$ is formed by elements of the form $\xi\otimes k_\lambda\otimes \eta$ ($\xi\in \GG, \eta\in \FF$). With a slight abuse of notation, we may write
\begin{equation*}
\begin{split}
(I_{\GG'}\otimes \mmm_F^*)(\mmm_G^*\otimes I_\FF)(\xi\otimes k_\lambda\otimes \eta)&= G(\lambda)^*\xi\otimes k_\lambda\otimes F(\lambda)^*\eta\\
&= (\mmm_G^*\otimes I_{\FF'})(I_\GG\otimes \mmm_F^*)(\xi\otimes k_\lambda\otimes \eta),
\end{split}
\end{equation*}
and therefore
\[
(I_\GG\otimes \mmm_F)(\mmm_G\otimes I_{\FF'})=(\mmm_G\otimes I_\FF)(I_{\GG'}\otimes \mmm_F).
\]
Define $\MM_G$ to be the image of $\mmm_G$, then $\MM_G$ is a vector subspace of $\GG\otimes\HK$, not necessarily closed. The above formula shows that for any $F:\Lambda\to \LL(\FF',\FF)$, we have
\[
(I_\GG\otimes \mmm_F)(\MM_G\otimes \FF')\subset (\MM_G\otimes \FF).
\]
We will shorten this property by saying that $\MM_G$ is \emph{completely invariant} to multipliers.

Suppose now that $G$ is a contractive multiplier. Then $\MM_G=\RR_{\mmm_G}$ as vector spaces, and we may define a norm on $\MM_G$ by applying formula~\eqref{eq:definition of the range norm}. Also, for any other Hilbert space $\FF$ we may define a norm on $\MM_G\otimes \FF$ by the same formula applied to $\mmm_G\otimes I_\FF$.

\begin{lemma}\label{le:the easy part of the subspace representation}
With the above notations, suppose $G$ is a contractive multiplier. Then  $\MM_G$ is a contractively included subspace of $\GG\otimes\HK$, completely invariant to multipliers. Moreover, if $F:\Lambda\to \LL(\FF',\FF) $ is a contractive multiplier, then $I_\GG\otimes \mmm_F$ acts contractively from $\MM_G\otimes \FF'$ to $\MM_G\otimes \FF$.
\end{lemma}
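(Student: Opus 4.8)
The plan is to treat the first two assertions as straightforward bookkeeping against the material already set up, and to extract the substantive claim---contractivity of $I_\GG\otimes\mmm_F$---from part~(1) of Lemma~\ref{le:contraction and commutation on range spaces} applied to the commutation identity obtained just before the statement.

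First, since $G$ is a contractive multiplier, $\mmm_G$ is a contraction and $\MM_G=\RR_{\mmm_G}$ as a vector space; equipping $\MM_G$ with the range norm~\eqref{eq:definition of the range norm} of $\mmm_G$ then gives $\MM_G\Subset\GG\otimes\HK$ at once. Complete invariance to multipliers is precisely the inclusion $(I_\GG\otimes\mmm_F)(\MM_G\otimes\FF')\subset\MM_G\otimes\FF$, which was already verified for an arbitrary multiplier $F$ in the computation preceding the lemma, so nothing more is needed there.

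For the remaining assertion I would start from the identity
\[
(I_\GG\otimes \mmm_F)(\mmm_G\otimes I_{\FF'})=(\mmm_G\otimes I_\FF)(I_{\GG'}\otimes \mmm_F)
\]
and read it against Lemma~\ref{le:contraction and commutation on range spaces}(1) with $C_1=\mmm_G\otimes I_{\FF'}$ (from $\GG'\otimes\HK\otimes\FF'$ to $\GG\otimes\HK\otimes\FF'$), $C_2=\mmm_G\otimes I_\FF$ (from $\GG'\otimes\HK\otimes\FF$ to $\GG\otimes\HK\otimes\FF$), and $D=I_{\GG'}\otimes\mmm_F$. Both $C_1$ and $C_2$ are contractions because $\mmm_G$ is (tensoring a contraction with an identity preserves the norm bound), and $D$ is a contraction exactly because $F$ is assumed to be a contractive multiplier. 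By the definition of the tensor norms adopted above, $\RR_{C_1}=\MM_G\otimes\FF'$ and $\RR_{C_2}=\MM_G\otimes\FF$, so Lemma~\ref{le:contraction and commutation on range spaces}(1) produces a contraction $T\colon\MM_G\otimes\FF'\to\MM_G\otimes\FF$ with $TC_1=C_2D$.

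It then remains to identify $T$ with the restriction of $I_\GG\otimes\mmm_F$. Given $x\in\MM_G\otimes\FF'$, choose $y$ with $C_1y=x$; the commutation identity yields $(I_\GG\otimes\mmm_F)x=(I_\GG\otimes\mmm_F)C_1y=C_2Dy$, which is precisely the value assigned to $Tx$ in the proof of Lemma~\ref{le:contraction and commutation on range spaces}(1). Since every element of $\MM_G\otimes\FF'$ is of the form $C_1y$, the operator $I_\GG\otimes\mmm_F$ coincides on $\MM_G\otimes\FF'$ with the contraction $T$ (in particular it does map into $\MM_G\otimes\FF$), which is the assertion. I expect no genuine obstacle; the only point requiring attention is the index bookkeeping---keeping $\HK$ in the middle tensor slot and placing the identity operators on the correct sides so that the commutation relation matches the template $TC_1=C_2D$ of Lemma~\ref{le:contraction and commutation on range spaces}(1).
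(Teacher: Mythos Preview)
Your proof is correct and matches the paper's approach: the same choices $C_1=\mmm_G\otimes I_{\FF'}$, $C_2=\mmm_G\otimes I_\FF$, $D=I_{\GG'}\otimes\mmm_F$ are made, and the commutation identity is fed into Lemma~\ref{le:contraction and commutation on range spaces}. The only cosmetic difference is that the paper invokes part~(2) of that lemma with $T=I_\GG\otimes\mmm_F$ given from the outset (checking (ii)$\Rightarrow$(i)), whereas you invoke part~(1) to obtain an abstract contraction and then identify it with $I_\GG\otimes\mmm_F$; the content is the same.
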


\begin{proof}
The first part of the lemma follows from the preceding comments. As for the last part, we use Lemma~\ref{le:contraction and commutation on range spaces}. If we take
\[
\begin{split}
&\CC_1=\GG'\otimes \HK\otimes \FF',\quad \CC_2=\GG'\otimes \HK\otimes \FF,\\
&\HH_1=  \GG\otimes \HK\otimes \FF',\quad \HH_2=  \GG\otimes \HK\otimes \FF,\\
&C_1=\mmm_G\otimes I_{\FF'},\quad C_2=\mmm_G\otimes I_{\FF},\\ &D=I_{\GG'}\otimes\mmm_F, \quad T=I_{\GG}\otimes\mmm_F,
\end{split}
\]
then condition (ii) therein is satisfied; then (i) gives the desired result.
\end{proof}

The following result from \cite{aronszajn1950} identifies the complementary subspace of $\MM_G$. 

\begin{lemma}\label{le:identification of the complementary subspace by its kernel}
The space $\MM_G^\sharp\Subset \GG\otimes \HK$ is the reproducing kernel space with kernel $(I_{\GG}- G(\mu)G(\lambda)^*)K(\mu, \lambda)$ (which is positive definite by Lemma~\ref{le:kernel condition for contractive multipliers}).
\end{lemma}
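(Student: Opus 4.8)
The plan is to combine Lemma~\ref{le:complementary space as image of defect} with Lemma~\ref{le:same range space}, identifying the relevant range space as a reproducing kernel space by computing the appropriate kernel. By Lemma~\ref{le:the easy part of the subspace representation} we have $\MM_G=\RR_{\mmm_G}$ as contractively included subspaces, so Lemma~\ref{le:complementary space as image of defect} gives $\MM_G^\sharp=\RR_{(I-\mmm_G\mmm_G^*)^{1/2}}$, the range of the defect operator $D:=(I_{\GG}\otimes I_{\HK}-\mmm_G\mmm_G^*)^{1/2}$ acting on $\GG\otimes\HK$. On the other hand, writing $K_\sharp(\mu,\lambda):=(I_{\GG}- G(\mu)G(\lambda)^*)K(\mu, \lambda)$, the positive definiteness assured by Lemma~\ref{le:kernel condition for contractive multipliers} means $K_\sharp$ is the reproducing kernel of a Hilbert space $\HHH(K_\sharp)$, and one always has $\HHH(K_\sharp)=\RR_L$ where $L$ is the (essentially tautological) map sending $\ell^2$-type data to the corresponding function; more to the point, $\HHH(K_\sharp)$ is precisely the range space of the positive operator $A$ on $\GG\otimes\HK$ determined by $\langle A(\xi\otimes k_\lambda),\eta\otimes k_\mu\rangle=\langle K_\sharp(\mu,\lambda)\xi,\eta\rangle$, with range norm. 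So by Lemma~\ref{le:same range space} it suffices to prove the operator identity $DD^*=A$, i.e.\ $I-\mmm_G\mmm_G^*=A$ as operators on $\GG\otimes\HK$.

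Concretely, I would verify this identity by testing against the total family of vectors $\eta\otimes k_\mu$. Using the defining relation $\mmm_G^*(\xi\otimes k_\lambda)=G(\lambda)^*\xi\otimes k_\lambda$, one computes
\[
\langle \mmm_G\mmm_G^*(\xi\otimes k_\lambda),\eta\otimes k_\mu\rangle
=\langle G(\lambda)^*\xi\otimes k_\lambda, G(\mu)^*\eta\otimes k_\mu\rangle
=\langle G(\mu)G(\lambda)^*\xi,\eta\rangle\,\langle k_\lambda,k_\mu\rangle,
\]
and since $\langle k_\lambda,k_\mu\rangle=K(\mu,\lambda)$, the right side equals $\langle G(\mu)G(\lambda)^*\xi,\eta\rangle K(\mu,\lambda)$. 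Subtracting from $\langle (\xi\otimes k_\lambda),(\eta\otimes k_\mu)\rangle = \langle\xi,\eta\rangle K(\mu,\lambda)$ gives exactly $\langle K_\sharp(\mu,\lambda)\xi,\eta\rangle$, which is the defining relation for $A$. Hence $(I-\mmm_G\mmm_G^*)$ and $A$ agree on a total set, so they coincide, and Lemma~\ref{le:same range space} finishes the argument.

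The one point requiring a little care — and the place I'd expect to spend the most words — is the passage from ``positive definite kernel'' to ``range of a bounded positive operator on $\GG\otimes\HK$ with matching range norm.'' One has to check that $A$ is genuinely a bounded operator (not merely a positive form): this follows because $0\le A\le I$, since $A=I-\mmm_G\mmm_G^*$ and $\mmm_G$ is a contraction by hypothesis, so the form $\langle A(\xi\otimes k_\lambda),\eta\otimes k_\mu\rangle$ extends to a bounded positive operator. Then one must confirm that $\RR_{A^{1/2}}$, with its range norm, really is the reproducing kernel space $\HHH(K_\sharp)$ — this is the standard fact (traceable to Aronszajn, as the paper notes) that a positive operator $A$ on a reproducing kernel space $\HHH(K)$ has $\RR_{A^{1/2}}=\HHH(K_A)$ isometrically, where $K_A(\mu,\lambda)$ is read off from $\langle A k_\lambda, k_\mu\rangle$; here the operator-valued/tensor setting is handled identically by testing on the total family $\xi\otimes k_\lambda$. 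Everything else is the routine computation above. I would state the Aronszajn fact explicitly (or cite \cite{aronszajn1950}) and then present the two-line verification of $DD^*=A$ as the heart of the proof.
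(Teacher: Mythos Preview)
Your argument is correct. Note, however, that the paper does not actually give a proof of this lemma at all---it simply attributes the result to Aronszajn~\cite{aronszajn1950}. What you have written is essentially the standard derivation: identify $\MM_G^\sharp=\RR_{(I-\mmm_G\mmm_G^*)^{1/2}}$ via Lemma~\ref{le:complementary space as image of defect}, compute the matrix entries of $I-\mmm_G\mmm_G^*$ against the total family $\xi\otimes k_\lambda$ to see they match $K_\sharp$, and then invoke the Aronszajn fact that for a positive contraction $A$ on a reproducing kernel space, $\RR_{A^{1/2}}$ is the reproducing kernel space whose kernel is read off from $\langle A(\xi\otimes k_\lambda),\eta\otimes k_\mu\rangle$. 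One small presentational point: your remark that ``$A$ is bounded because $A=I-\mmm_G\mmm_G^*$'' reads as circular, since that identity is what you are proving; it is cleaner to simply compute that the bounded operator $I-\mmm_G\mmm_G^*$ has exactly the matrix entries $\langle K_\sharp(\mu,\lambda)\xi,\eta\rangle$, and then apply the Aronszajn fact directly to it, rather than introducing $A$ separately via the form.
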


\section{Pick spaces}\label{se:pick spaces}

A \emph{Pick kernel} $K$ defined on a space $\Lambda$ is characterized by the property that for any $\lambda_0\in\Lambda$ the function 
\[
 1-\frac{K(\mu, \lambda_0) K(\lambda_0, \lambda)}{K(\mu, \lambda) K(\lambda_0, \lambda_0)}
\]
is also a positive definite kernel. The corresponding reproducing kernel space $\HK$ is called a \emph{Pick space}. The point $\lambda_0$ is called the \emph{base  point}. Its choice is in fact not important; if we assume, for instance, that $K(\mu, \lambda)\not=0$ everywhere, then the condition is independent of the choice of the base point (see~\cite{agler-mccarthy-book}). Note that in~\cite{agler-mccarthy-book} this is called the \emph{complete Pick} property.

If we denote by $\delta$ the normalization of $k_{\lambda_0}$ (that is,    $ \delta=\frac{k_{\lambda_0}}{\|k_{\lambda_0}\|}$),
one can rewrite the above condition as saying that there exists a Hilbert space $\BB$ and a function $\beta:\Lambda\to \BB$ such that
\begin{equation}\label{eq:pick}
  K(\mu, \lambda)=\frac{\delta(\mu)\overline{\delta(\lambda)}}{1-\<\beta(\lambda), \beta(\mu)\>}.
\end{equation}
In particular, by taking $\lambda=\mu$, we obtain that $\|\beta (\lambda)\|<1$ for all $\lambda$. 

One should note here that $\BB$ and $\beta$ are essentially uniquely defined by the minimality condition $\BB=\Span\{\beta(\lambda):\lambda\in\Lambda\}$. Indeed, if $\beta':\Lambda\to\BB'$ also satisfies~\eqref{eq:pick}, and the corresponding minimality condition, then the map $\beta(\lambda)\mapsto \beta'(\lambda)$ extends to a unitary $U:\BB\to\BB'$ such that $\beta'=U\circ\beta$. We will always assume that the minimality condition is satisfied.

There is a certain contractive multiplier associated to a Pick kernel that will play an important role in the sequel. Namely, define $B(\lambda):\BB\to \bbC$ by the formula
\begin{equation}\label{eq:definition of B}
B(\lambda)\xi=\<\xi, \beta (\lambda) \>.
\end{equation}
We may rewrite~\eqref{eq:pick} as
\[
 (1-B (\mu)B(\lambda)^*)K(\mu, \lambda)=\delta(\mu)\overline{\delta(\lambda)}.
\]
The right hand side term is positive definite, and then so is the left hand side term. Taking into account Lemma~\ref{le:kernel condition for contractive multipliers}, this means that $B(\lambda)$ is a contractive multiplier from $\BB$ into $\bbC$. We denote $\bmB=\mmm_B:\BB\otimes \HK\to \HK$. One checks easily from the definition that $B(\lambda)^*1=\beta(\lambda)$ and 
\begin{equation}\label{eq:B*(k_lambda)}
 \bmB^*(k_\lambda)=\beta(\lambda)\otimes k_\lambda.
\end{equation}

There is a slight difference between our convention and the one that usually appears in the theory of Pick kernels, as for instance in \cite[Ch. 8]{agler-mccarthy-book}. Namely, in formula~\eqref{eq:pick} the denominator in the right hand side is $1-\< \beta(\mu), \beta(\lambda)\>$. Since we are mainly interested in the corresponding multiplier $B$, it is more natural to introduce the function $\beta$ as in~\eqref{eq:pick}. But it will also be useful to consider an arbitrary conjugation $J$ acting on the Hilbert space $\BB$, and to define $\bar\beta(\lambda)=J\beta(\lambda)$. The function $\bar\beta$ is related to an embedding theorem for Pick spaces, that we discuss in the sequel.

Suppose $\bbB$ is the unit ball in the Hilbert space $\BB$. By $\DD(\bbB)$ we will denote the \emph{Drury--Arveson} space~\cite{drury1978, arveson-subalgebrasiii-acta1998}, which is the reproducing kernel space corresponding to $\Lambda=\bbB$ and reproducing kernel
\[
\DDD (\eta, \xi)=\frac{1}{1-\<\eta, \xi\>}.
\]
This is called the ``universal Pick space''  in~\cite{agler-mccarthy-book}. For a point $\xi\in \bbB$, $\ddd_\xi$ is the corresponding reproducing kernel in $\DD(\bbB)$, and $\psi_\xi$ is the function $\psi_\xi(\eta):= \< \eta,\xi \>$. Then all $\psi_\xi$s  are contractive multipliers, and a common eigenvector of these contractive multipliers has to be a reproducing kernel.

Based on the obvious relation $K(\mu, \lambda)=\DDD(\bar\beta (\mu), \bar\beta(\lambda))$, in~\cite[Theorem 8.2]{agler-mccarthy-book},  the following embedding result is proved.

\begin{theorem}\label{th:embedding}
The map $k_\lambda\mapsto \ddd_{\bar\beta(\lambda)}$ extends to an isometric linear embedding $\epsilon_K$ from $\HK$ into $\DD(\bbB)$. The adjoint $\epsilon_K^*$ of this embedding is composition with $\bar\beta$.
\end{theorem}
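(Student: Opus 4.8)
The plan is to construct $\epsilon_K$ from its prescribed action on the total family $\{k_\lambda:\lambda\in\Lambda\}$ and to verify that it is isometric by comparing Gram matrices, the kernel identity recorded before the statement being the only real input. First I would check that $\ddd_{\bar\beta(\lambda)}$ is a legitimate element of $\DD(\bbB)$: since $J$ is a conjugation it is isometric, so $\|\bar\beta(\lambda)\|=\|J\beta(\lambda)\|=\|\beta(\lambda)\|<1$, and hence $\bar\beta(\lambda)$ lies in the open ball $\bbB$. I would also recall the relation $K(\mu,\lambda)=\DDD(\bar\beta(\mu),\bar\beta(\lambda))$ noted just above the statement; it follows from~\eqref{eq:pick} together with the fact that a conjugation reverses inner products, namely $\<\bar\beta(\mu),\bar\beta(\lambda)\>=\<J\beta(\mu),J\beta(\lambda)\>=\<\beta(\lambda),\beta(\mu)\>$.

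Next I would establish that $k_\lambda\mapsto\ddd_{\bar\beta(\lambda)}$ extends to an isometry. For a finite linear combination, using $\<k_{\lambda_i},k_{\lambda_j}\>_{\HK}=K(\lambda_j,\lambda_i)$ and $\<\ddd_{\xi_i},\ddd_{\xi_j}\>_{\DD(\bbB)}=\DDD(\xi_j,\xi_i)$ together with the above relation, one gets
\[
\Bigl\|\sum_i c_i k_{\lambda_i}\Bigr\|_{\HK}^2=\sum_{i,j}c_i\overline{c_j}\,K(\lambda_j,\lambda_i)=\sum_{i,j}c_i\overline{c_j}\,\DDD(\bar\beta(\lambda_j),\bar\beta(\lambda_i))=\Bigl\|\sum_i c_i\ddd_{\bar\beta(\lambda_i)}\Bigr\|_{\DD(\bbB)}^2 .
\]
In particular the assignment $\sum_i c_i k_{\lambda_i}\mapsto\sum_i c_i\ddd_{\bar\beta(\lambda_i)}$ is well defined (a combination representing $0$ in $\HK$ is sent to a vector of norm $0$), linear, and isometric on the dense subspace $\Span\{k_\lambda:\lambda\in\Lambda\}$; it therefore extends uniquely to an isometry $\epsilon_K:\HK\to\DD(\bbB)$, which, being isometric, is injective, hence an embedding.

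For the adjoint, fix $F\in\DD(\bbB)$. Using the reproducing property in $\HK$, the definition of the adjoint, and the reproducing property in $\DD(\bbB)$,
\[
(\epsilon_K^*F)(\lambda)=\<\epsilon_K^*F,k_\lambda\>_{\HK}=\<F,\epsilon_K k_\lambda\>_{\DD(\bbB)}=\<F,\ddd_{\bar\beta(\lambda)}\>_{\DD(\bbB)}=F(\bar\beta(\lambda)),
\]
so $\epsilon_K^*F=F\circ\bar\beta$, as claimed. The argument is short; the substantive points deserving care are that $\bar\beta$ maps $\Lambda$ into the open ball (so the kernels $\ddd_{\bar\beta(\lambda)}$ exist) and the kernel identity $K(\mu,\lambda)=\DDD(\bar\beta(\mu),\bar\beta(\lambda))$ — after which everything is the standard mechanism turning an equality of reproducing kernels into a (co)isometric intertwiner between the associated function spaces.
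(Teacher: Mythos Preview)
Your proof is correct and is exactly the standard argument the paper gestures at when it writes ``Based on the obvious relation $K(\mu,\lambda)=\DDD(\bar\beta(\mu),\bar\beta(\lambda))$'' before citing~\cite[Theorem~8.2]{agler-mccarthy-book} in lieu of a proof. One small caveat: that kernel identity does not follow from~\eqref{eq:pick} alone, since~\eqref{eq:pick} carries the factor $\delta(\mu)\overline{\delta(\lambda)}$; the identity (and hence the map $k_\lambda\mapsto\ddd_{\bar\beta(\lambda)}$ being isometric) is literally correct only under the normalization $\delta\equiv1$, which the paper is tacitly assuming here --- otherwise the isometry is $k_\lambda\mapsto\overline{\delta(\lambda)}\,\ddd_{\bar\beta(\lambda)}$.
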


It is easy to see that  $\epsilon_K$  is unitary if and only if the image of $\bar\beta$ is a uniqueness set for $\DD(\bbB)$, that is, if $f\in \DD(\bbB)$ and $f|E\equiv 0$ imply $f\equiv0$.

\begin{lemma}\label{le:M*phi J=J M*phi o b}
If $\phi$ is a contractive multiplier on $\DD(\bbB)$, then $\phi\circ \bar\beta$ is a contractive multiplier on $\HK$, and we have
\[
M^*_\phi   \epsilon_K=  \epsilon_K M^*_{\phi\circ \bar\beta},
\]
where the multiplier on the left acts in $\DD(\bbB)$ and the one on the right in $\HK$.
\end{lemma}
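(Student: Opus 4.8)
The plan is to verify the intertwining relation on the total family $\{\ddd_{\bar\beta(\lambda)} : \lambda \in \Lambda\}$ inside $\DD(\bbB)$, using Theorem~\ref{th:embedding} to translate between the two spaces, and then to deduce that $\phi\circ\bar\beta$ is a contractive multiplier as a consequence. First I would recall from Theorem~\ref{th:embedding} that $\epsilon_K$ is isometric with $\epsilon_K k_\lambda = \ddd_{\bar\beta(\lambda)}$ and $\epsilon_K^* = {}$ ``composition with $\bar\beta$'', so that $\epsilon_K^* g = g\circ\bar\beta$ for $g \in \DD(\bbB)$. The natural candidate for the multiplier on $\HK$ is therefore the function $\lambda \mapsto \phi(\bar\beta(\lambda))$, and one must show this is a contractive multiplier and that $M^*_\phi\epsilon_K = \epsilon_K M^*_{\phi\circ\bar\beta}$.

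The key computation is on reproducing kernels. Since $\phi$ is a contractive multiplier on $\DD(\bbB)$, we have $M^*_\phi \ddd_\xi = \overline{\phi(\xi)}\, \ddd_\xi$ for every $\xi \in \bbB$ (this is the standard fact that $M^*_\phi$ has the kernels as eigenvectors, with the conjugate of the value as eigenvalue; here it follows from the adjoint formula $M^*_\phi(\xi\otimes k_\lambda) = \phi(\lambda)^*\xi \otimes k_\lambda$ specialized to the scalar case). Applying this with $\xi = \bar\beta(\lambda)$ gives
\[
M^*_\phi \epsilon_K k_\lambda = M^*_\phi \ddd_{\bar\beta(\lambda)} = \overline{\phi(\bar\beta(\lambda))}\, \ddd_{\bar\beta(\lambda)} = \overline{\phi(\bar\beta(\lambda))}\, \epsilon_K k_\lambda = \epsilon_K\bigl(\overline{(\phi\circ\bar\beta)(\lambda)}\, k_\lambda\bigr).
\]
So if $\phi\circ\bar\beta$ is indeed a contractive multiplier on $\HK$, then $M^*_{\phi\circ\bar\beta} k_\lambda = \overline{(\phi\circ\bar\beta)(\lambda)}\, k_\lambda$ and the right-hand side equals $\epsilon_K M^*_{\phi\circ\bar\beta} k_\lambda$; since the $k_\lambda$ span $\HK$, the intertwining identity follows. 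It remains only to establish that $\phi\circ\bar\beta$ is a contractive multiplier. For this I would invoke Lemma~\ref{le:kernel condition for contractive multipliers}: $\phi$ being a contractive multiplier on $\DD(\bbB)$ means $(1-\phi(\eta)\overline{\phi(\xi)})\DDD(\eta,\xi)$ is positive definite on $\bbB$; restricting this positive definite kernel along the map $\lambda\mapsto\bar\beta(\lambda)$ (a restriction of a positive definite kernel is positive definite) and using the identity $K(\mu,\lambda) = \DDD(\bar\beta(\mu),\bar\beta(\lambda))$ noted before Theorem~\ref{th:embedding}, we get that $(1-(\phi\circ\bar\beta)(\mu)\overline{(\phi\circ\bar\beta)(\lambda)})K(\mu,\lambda)$ is positive definite, which by Lemma~\ref{le:kernel condition for contractive multipliers} says exactly that $\phi\circ\bar\beta$ is a contractive multiplier on $\HK$.

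The argument is essentially routine once the pieces are lined up; the only point requiring a little care is the bookkeeping with complex conjugates and adjoints — in particular making sure that the scalar-valued specialization of the multiplier convention $\mmm_G^*(\xi\otimes k_\lambda) = G(\lambda)^*\xi\otimes k_\lambda$ indeed yields $M^*_\phi k_\lambda = \overline{\phi(\lambda)}\,k_\lambda$, and that $\epsilon_K^*$ acting as composition with $\bar\beta$ is consistent with the adjoint of $k_\lambda\mapsto\ddd_{\bar\beta(\lambda)}$. I expect no genuine obstacle: the substance is packaged in Theorem~\ref{th:embedding} and Lemma~\ref{le:kernel condition for contractive multipliers}, and the proof reduces to checking an eigenvector identity on a total set.
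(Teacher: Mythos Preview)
Your proposal is correct and follows essentially the same approach as the paper: the paper also derives the contractive-multiplier assertion from Lemma~\ref{le:kernel condition for contractive multipliers} (via the restriction $K(\mu,\lambda)=\DDD(\bar\beta(\mu),\bar\beta(\lambda))$), and then verifies the intertwining identity on the total set of reproducing kernels using $M_\phi^*\ddd_{\bar\beta(\lambda)}=\overline{\phi(\bar\beta(\lambda))}\,\ddd_{\bar\beta(\lambda)}$ and $\epsilon_K k_\lambda=\ddd_{\bar\beta(\lambda)}$. Your write-up simply spells out the kernel-restriction step in more detail than the paper does.
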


\begin{proof}
The first part of the lemma follows from Lemma~\ref{le:kernel condition for contractive multipliers}. As for the equality, it has to be checked on reproducing kernels, where we have
\[
M_\phi^*( \epsilon_Kk_\lambda)= M_\phi^*(\ddd_{\bar\beta(\lambda)})=
\overline{\phi(\bar\beta(\lambda))} \ddd_{\bar\beta(\lambda)}=
 \epsilon_K(\overline{\phi(\bar\beta(\lambda))} k_\lambda= M^*_{\phi\circ \bar\beta}k_\lambda.\qedhere
\]
\end{proof}

Let us finally note that for any $\xi\in\bbB$ the function $p_\xi(\eta):= \< \eta,\xi \>$ is a contractive multiplier, and a common eigenvector of these contractive multipliers has to be a reproducing kernel.

\section{Completely invariant contractively included subspaces of Pick spaces}\label{se:invariant subspaces}

\begin{lemma}\label{le:delta}
We have $\delta\in \HHH(K)$ and $\|\delta\|=1$, and $I-\bmB\bmB^*$ is  the projection onto the space generated by $\delta$.
\end{lemma}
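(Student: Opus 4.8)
First I would establish that $\delta \in \HK$ with $\|\delta\|=1$. By definition $\delta = k_{\lambda_0}/\|k_{\lambda_0}\|$, so it is manifestly an element of $\HK$ of norm one; the only thing to check is that $\delta$, viewed as a function on $\Lambda$, agrees with the function denoted $\delta$ in~\eqref{eq:pick}. Evaluating the reproducing kernel, $\delta(\mu) = \<\delta, k_\mu\>^{-}{}$... more precisely $\overline{\delta(\mu)} = \< k_\mu, \delta\> = \|k_{\lambda_0}\|^{-1} K(\lambda_0,\mu)$, and setting $\mu = \lambda$, $\lambda_0$ in~\eqref{eq:pick} and comparing pins down $\delta(\mu)$ up to a unimodular constant, which is harmless; I would simply note this identification and fix the normalization so that $\delta(\lambda_0) = \|k_{\lambda_0}\|$, consistent with~\eqref{eq:pick} at $\mu=\lambda=\lambda_0$ (where $\beta(\lambda_0)=0$ since $K(\lambda_0,\lambda_0)=|\delta(\lambda_0)|^2/1$).

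The substantive claim is that $I - \bmB\bmB^*$ is the orthogonal projection onto $\bbC\delta$. The key computation is on the total set $\{k_\lambda : \lambda\in\Lambda\}$. Using~\eqref{eq:B*(k_lambda)}, $\bmB^* k_\lambda = \beta(\lambda)\otimes k_\lambda$, and then $\bmB(\beta(\lambda)\otimes k_\lambda)$ is computed from the definition of $\mmm_B$ via its adjoint: for any $\mu$, $\<\bmB\bmB^* k_\lambda, k_\mu\> = \<\beta(\lambda)\otimes k_\lambda, \bmB^* k_\mu\> = \<\beta(\lambda)\otimes k_\lambda, \beta(\mu)\otimes k_\mu\> = \<\beta(\lambda),\beta(\mu)\> K(\mu,\lambda)$. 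Hence
\[
\<(I-\bmB\bmB^*)k_\lambda, k_\mu\> = (1 - \<\beta(\lambda),\beta(\mu)\>)K(\mu,\lambda) = \delta(\mu)\overline{\delta(\lambda)},
\]
the last equality being exactly~\eqref{eq:pick} rewritten. This says $(I-\bmB\bmB^*)k_\lambda = \overline{\delta(\lambda)}\,\delta$ as a function (check by pairing against each $k_\mu$), so the range of $I - \bmB\bmB^*$ is contained in $\bbC\delta$, and since $(I-\bmB\bmB^*)$ applied to $k_{\lambda_0}$ gives $\overline{\delta(\lambda_0)}\delta \ne 0$, the range is exactly $\bbC\delta$.

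Finally I would check $I - \bmB\bmB^*$ is idempotent (hence the orthogonal projection, being self-adjoint): since $\bmB$ is a contractive multiplier, $\bmB\bmB^* \le I$; and from the displayed formula $(I-\bmB\bmB^*)k_\lambda = \overline{\delta(\lambda)}\delta$ together with $\<\delta,\delta\>=1$ one gets $(I-\bmB\bmB^*)\delta = \delta$ (take $\lambda=\lambda_0$ and normalize, or pair $\delta$ against each $k_\mu$), whence $(I-\bmB\bmB^*)^2 k_\lambda = \overline{\delta(\lambda)}(I-\bmB\bmB^*)\delta = \overline{\delta(\lambda)}\delta = (I-\bmB\bmB^*)k_\lambda$ on the total set. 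The main obstacle is bookkeeping with the tensor-product/adjoint definition of $\bmB = \mmm_B$ and making sure the scalar-multiplier conventions (especially the conjugates $\overline{\delta(\lambda)}$ and the placement of $B(\lambda)^*$) are handled consistently; once~\eqref{eq:pick} is read as the rank-one kernel identity above, everything else is formal.
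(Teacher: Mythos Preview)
Your proposal is correct and follows essentially the same approach as the paper: both compute $\langle (I-\bmB\bmB^*)k_\lambda, k_\mu\rangle$ on the total set of reproducing kernels and use the Pick identity~\eqref{eq:pick} to obtain $\delta(\mu)\overline{\delta(\lambda)}$. The only cosmetic difference is that the paper then compares this directly with the matrix elements of the rank-one projection $\pi f = \langle f,\delta\rangle\delta$, whereas you first read off $(I-\bmB\bmB^*)k_\lambda = \overline{\delta(\lambda)}\,\delta$ and then verify idempotence; your extra discussion about identifying $\delta$ as a function is unnecessary (the paper takes $\delta = k_{\lambda_0}/\|k_{\lambda_0}\|$ by definition and~\eqref{eq:pick} is stated for that $\delta$), but harmless.
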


\begin{proof}
The first statement follows immediately from the definition of $\delta$. For the second, take two reproducing kernels $k_\mu, k_\lambda$. Then
\[
\begin{split}
\< (I-\bmB\bmB^*)k_\lambda,k_\mu \>&=
\< k_\lambda,k_\mu \>- \< \bmB^*k_\lambda,\bmB^*k_\mu \>\\
&= K(\mu, \lambda)- \< B(\lambda)^*k_\lambda, B(\mu)^*k_\mu \>\\
&= (1-B (\mu)B(\lambda)^*)K(\mu, \lambda)=\delta(\mu)\overline{\delta(\lambda)}.
\end{split}
\]
On the other hand, if we define $\pi f= \< f,\delta \>\delta$, then 
\[
\< \pi k_\lambda,k_\mu \>= \< k_\lambda, \delta\> \cdot \< \delta, k_\mu \> = \delta(\mu)\overline{\delta(\lambda)}.
\]
Thus $I-\bmB\bmB^*=\pi$, which proves the lemma.
\end{proof}

The main result below concerns contractively included subspaces. The proof is suggested by that  of~\cite[Theorem 0.7]{mccullough-trent2000}.

\begin{theorem}\label{th:invariant spaces}
 Suppose $K$ is a Pick kernel, $\GG$ is a Hilbert space, and $\MM$ is  a contractively included subspace of $\HK\otimes\GG$. Then the following are equivalent:
 
(i) $\MM$ is completely invariant to multipliers, and, moreover, if $F:\Lambda\to \LL(\FF', \FF)$ is a contractive multiplier, then $ \mmm_F\otimes I_\GG$ acts contractively from $\FF'\otimes \MM$ to $\FF\otimes \MM$.

(ii) $(\bmB\otimes I_\GG)(\BB\otimes\MM)\subset \MM$, and $\bmB\otimes I_\GG$ acts contractively on from $\BB\otimes\MM$ to $\MM$.

 (iii) There exists a Hilbert space $\GG'$ and a contractive multiplier $G:\Lambda\to\LL(\GG', \GG)$ such that $\MM=\MM_G$.
\end{theorem}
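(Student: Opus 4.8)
The plan is to establish the cycle of implications $(i)\Rightarrow(ii)\Rightarrow(iii)\Rightarrow(i)$. The implication $(iii)\Rightarrow(i)$ is essentially already done: it is exactly the content of Lemma~\ref{le:the easy part of the subspace representation} (applied with the roles of the tensor factors arranged so that $\HK$ sits in the middle, i.e.\ replacing $\GG\otimes\HK$ there by $\HK\otimes\GG$ here and using that $\mmm_F\otimes I_\GG$ plays the role of $I_\GG\otimes\mmm_F$). The implication $(i)\Rightarrow(ii)$ is also immediate: by Lemma~\ref{le:delta} the function $B:\Lambda\to\LL(\BB,\bbC)$ is a contractive multiplier, so one simply specializes the hypothesis in $(i)$ to $F=B$, $\FF'=\BB$, $\FF=\bbC$, noting that $\mmm_B\otimes I_\GG = \bmB\otimes I_\GG$ maps $\BB\otimes\MM$ contractively into $\bbC\otimes\MM\cong\MM$.

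The real work is in $(ii)\Rightarrow(iii)$. Here I would follow the strategy of~\cite[Theorem~0.7]{mccullough-trent2000} adapted to the contractively included setting. Write $\MM=\RR_C$ for some contraction $C:\CC\to\HK\otimes\GG$; by Lemma~\ref{le:same range space} the object that really matters is the positive operator $X:=CC^*$ on $\HK\otimes\GG$. Hypothesis $(ii)$, via Lemma~\ref{le:contraction and commutation on range spaces}(2)(iii) applied to the operator $T=\bmB\otimes I_\GG$ and to $C_1 = I_\BB\otimes C$ (so that $\RR_{C_1}=\BB\otimes\MM$), translates into the operator inequality
\[
(\bmB\otimes I_\GG)(I_\BB\otimes X)(\bmB\otimes I_\GG)^* \le X,
\]
i.e.\ $(\bmB\otimes I_\GG)(I_\BB\otimes CC^*)(\bmB^*\otimes I_\GG)\le CC^*$. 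The point is to manufacture from this a ``Beurling--Lax'' factorization. Using the identity $I-\bmB\bmB^*=\pi$ (the rank-one-valued projection onto $\bbC\delta$, from Lemma~\ref{le:delta}), tensored with $I_\GG$, one rewrites the inequality as a statement that
\[
X - (\pi\otimes I_\GG)X(\pi\otimes I_\GG)^{\perp\text{-part}}
\]
is dominated appropriately; more precisely, the defect $X-(\bmB\otimes I_\GG)(I_\BB\otimes X)(\bmB^*\otimes I_\GG)\ge 0$ factors through $\pi\otimes I_\GG$ (since $I_{\HK\otimes\GG}-(\bmB\otimes I_\GG)(\bmB^*\otimes I_\GG)=\pi\otimes I_\GG$ is a projection onto $\delta\otimes\GG$). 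Writing this nonnegative defect as $D D^*$ for some $D:\GG'\to\HK\otimes\GG$ whose range lies in $\delta\otimes\GG$, one iterates the inequality along powers of $\bmB\otimes I_\GG$ and sums a telescoping series; convergence is guaranteed because $\bmB\otimes I_\GG$ is a contraction and, on a Pick space, the powers of $\bmB$ tend strongly to $0$ on a dense set (this is where the Pick structure, through the embedding into the Drury--Arveson space of Theorem~\ref{th:embedding}, is used to control $(\bmB\bmB^*)^n\to 0$ in the relevant weak sense). The upshot is a representation $X=\sum_{n\ge 0}(\bmB\otimes I_\GG)^n (I_{\BB^{\otimes n}}\otimes D D^*)(\bmB^*\otimes I_\GG)^n$, which reassembles into $X = \mmm_G\mmm_G^*$ for a single operator $\mmm_G$ built from $D$ and the ``coefficients'' $\bmB^n$; reading off the symbol gives a contractive multiplier $G:\Lambda\to\LL(\GG',\GG)$ with $G(\lambda)^*\eta = \sum_n \beta(\lambda)^{\otimes n}\otimes(\text{component of }D^*\text{ at }\lambda)$, and Lemma~\ref{le:same range space} then yields $\MM=\RR_C=\RR_{\mmm_G}=\MM_G$.

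The main obstacle I anticipate is exactly this last reassembly and the convergence argument: one must check that the formally defined $G$ is genuinely a contractive multiplier (equivalently, that $(I_\GG - G(\mu)G(\lambda)^*)K(\mu,\lambda)$ is positive definite, via Lemma~\ref{le:kernel condition for contractive multipliers}), and that $\mmm_G\mmm_G^*$ equals the telescoped sum rather than merely dominating it. In the closed-subspace case of~\cite{mccullough-trent2000} one has a projection and Wold-type decomposition to lean on; here, because $\MM$ need not be closed, one instead works throughout with the operator $X=CC^*$ and must be careful that the infimum-norm on $\RR_{\mmm_G}$ genuinely reproduces the given norm on $\MM$ — but this is precisely what Lemma~\ref{le:same range space} delivers once the operator identity $CC^*=\mmm_G\mmm_G^*$ is in hand. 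A secondary technical point is the strong-convergence of $(\bmB^*\otimes I_\GG)^n$ on the range of $C$; I would handle it by transporting the question through $\epsilon_K$ to the Drury--Arveson space, where $B$ becomes the row-contraction of coordinate multipliers and the required decay is classical.
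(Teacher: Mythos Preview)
Your handling of $(iii)\Rightarrow(i)$ and $(i)\Rightarrow(ii)$ is fine and matches the paper. For $(ii)\Rightarrow(iii)$ there is a concrete error, and the whole iterative strategy is an unnecessary detour.

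The error is the claim that the defect $\Delta:=CC^*-(\bmB\otimes I_\GG)(I_\BB\otimes CC^*)(\bmB^*\otimes I_\GG)$ ``factors through $\pi\otimes I_\GG$'', i.e.\ has range in $\delta\otimes\GG$. The identity $I-\bmB\bmB^*=\pi$ concerns the defect of the \emph{identity}, not of $CC^*$; for a general contraction $C$ there is no reason for $\Delta$ to have one-dimensional $\HK$-component. Indeed, the kernel computation gives
\[
\langle \Delta(k_\lambda\otimes\xi),k_\mu\otimes\eta\rangle=(1-B(\mu)B(\lambda)^*)\,\langle CC^*(k_\lambda\otimes\xi),k_\mu\otimes\eta\rangle,
\]
which is not of the form $\delta(\mu)\overline{\delta(\lambda)}\langle\cdot,\cdot\rangle_\GG$. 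Since your proposed symbol $G(\lambda)^*\eta=\sum_n\beta(\lambda)^{\otimes n}\otimes(\text{component of }D^*)$ depends on exactly this range property to make sense as a map into a fixed $\GG'$, the construction as written does not produce a multiplier (and a direct check shows the resulting $\mmm_G\mmm_G^*$ acquires an extra factor of $K(\mu,\lambda)$).

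What the paper does instead is much shorter and avoids all convergence issues: once $\Delta\ge 0$ is in hand, write $\Delta=XX^*$ with $X:\GG'\to\HK\otimes\GG$ arbitrary and \emph{define} $G(\lambda)^*\xi:=\overline{\delta(\lambda)}^{-1}X^*(k_\lambda\otimes\xi)$. Then a single reproducing-kernel computation, using the Pick identity $(1-B(\mu)B(\lambda)^*)K(\mu,\lambda)=\delta(\mu)\overline{\delta(\lambda)}$ to cancel the factor above, gives
\[
\langle \mmm_G\mmm_G^*(k_\lambda\otimes\xi),k_\mu\otimes\eta\rangle=\frac{K(\mu,\lambda)}{\overline{\delta(\lambda)}\delta(\mu)}\langle \Delta(k_\lambda\otimes\xi),k_\mu\otimes\eta\rangle=\langle CC^*(k_\lambda\otimes\xi),k_\mu\otimes\eta\rangle,
\]
so $\mmm_G\mmm_G^*=CC^*$ and Lemma~\ref{le:same range space} finishes. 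No telescoping, no appeal to Theorem~\ref{th:embedding}, no strong convergence of $\bmB^{*(n)}$: the Pick structure is used exactly once, to invert the scalar factor $1-B(\mu)B(\lambda)^*$.
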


\begin{proof}
(iii)$\Rightarrow$(i). This has been proved in Lemma~\ref{le:the easy part of the subspace representation}.

\smallskip

(i)$\Rightarrow$(ii). Since $\bmB$ is a contractive multiplier, (ii) follows immediately from (i).

\smallskip
(ii)$\Rightarrow$(iii). Let us denote by $C:\MM\to \GG\otimes \HK$ the inclusion (which is known to be a contraction). 
Applying Lemma~\ref{le:contraction and commutation on range spaces} to the case $C_1=I_\BB\otimes C$, $C_2=C$, and $T=\bmB\otimes I_\GG$, we obtain
$( \bmB\otimes I_\GG) (I_\BB\otimes CC^*) (\bmB^*\otimes I_\GG)\le CC^*$, or 
\[
 CC^*-( \bmB\otimes I_\GG) (I_\BB\otimes CC^*) (\bmB^*\otimes I_\GG)\ge0,
\]
so we may write 
\begin{equation}\label{eq:definition of X}
  CC^*-( \bmB\otimes I_\GG) (I_\BB\otimes CC^*) (\bmB^*\otimes I_\GG)=XX^*
\end{equation}
for some Hilbert space $\GG'$ and operator $X:\GG'\to\HK\otimes \GG$.
 
On the other hand, for $\lambda\in\Lambda$, $\xi\in\GG$ we have
\[
 (I_\BB\otimes C^*)(\bmB^*\otimes I_\GG) ( k_\lambda\otimes\xi)=
(I_\BB\otimes C^*)(B(\lambda)^*\otimes k_\lambda\otimes\xi)=
B(\lambda)^*\otimes C^* (k_\lambda\otimes\xi)
\]
and so, if $\lambda, \mu\in \Lambda$, $\xi, \eta\in \GG$, then
\begin{equation}\label{eq:first sum b_j}
\begin{split}&
\< CC^*-( \bmB\otimes I_\GG) (I_\BB\otimes CC^*) (\bmB^*\otimes I_\GG) (k_\lambda\otimes\xi ),(k_\mu\otimes\eta )\>\\&\qquad =
\< C^*(k_\lambda\otimes\xi), C^*(k_\mu\otimes\eta)  \>\\
&\qquad\qquad\qquad-
 \<( I_\BB\otimes C^*)  (\bmB^*\otimes I_\GG) ( k_\lambda\otimes\xi) , ( I_\BB\otimes C^*)  (\bmB^*\otimes I_\GG) ( k_\mu\otimes\eta) \>\\
&\qquad =(1-B(\mu) B(\lambda)^*) \< C^*(k_\lambda\otimes\xi), C^*(k_\mu\otimes\eta  ) \>\\
&\qquad= \frac{\overline{\delta(\lambda)}\delta(\mu)}{K(\mu, \lambda)}
  .\< C^*(k_\lambda\otimes\xi), C^*(k_\mu\otimes\eta)  \>.
\end{split}
\end{equation}

Define then the function $G:\Lambda\to \LL(\GG',\GG)$ by the formula
\[
G(\lambda)^* \xi=
\begin{cases}
\frac{1}{\overline{\delta(\lambda)}} X^*( k_\lambda\otimes\xi)& \text{ if }\delta(\lambda)\not=0,\\
0&\text{ otherwise}.
\end{cases}
\]
Then, if $\delta(\lambda)\not=0$,
\[
\mmm_G^*(k_\lambda\otimes\xi)=\frac{1}{\overline{\delta(\lambda)}} [X^*(k_\lambda\otimes\xi)]\otimes k_\lambda,
\]
and thus, if $\delta(\lambda)\not=0$, $\delta(\mu)\not=0$, then, using~\eqref{eq:first sum b_j} and~\eqref{eq:definition of X}, we obtain
\[
\begin{split}
&\<\mmm_G \mmm_G^* (k_\lambda\otimes\xi), (\eta\otimes k_\mu)  \>\\
&\qquad= \frac{1}{\overline{\delta(\lambda)}\delta(\mu)}\< [X^*(k_\lambda\otimes\xi)]\otimes k_\lambda, [X^*(k_\mu\otimes\eta)]\otimes k_\mu\>\\
&\qquad =\frac{K(\mu, \lambda)}{\overline{\delta(\lambda)}\delta(\mu)} \< XX^*(k_\lambda\otimes\xi),(k_\mu\otimes\eta)\>\\
&\qquad= \frac{K(\mu, \lambda)}{\overline{\delta(\lambda)}\delta(\mu)}\<( CC^*-( \bmB\otimes I_\GG) (I_\BB\otimes CC^*) (\bmB^*\otimes I_\GG)) (k_\lambda\otimes\xi),(k_\mu\otimes\eta)\>\\
&\qquad=\< C^*(k_\lambda\otimes\xi), C^*(k_\mu\otimes\eta)  \>=\< CC^*(k_\lambda\otimes\xi), (k_\mu\otimes\eta)  \>.
\end{split}
\]
Therefore $\mmm_G \mmm_G^*=CC^*$, whence it follows, by Lemma~\ref{le:same range space}, that $\MM=\MM_G$.
\end{proof}

The next corollary is a reformulation of the main part of~\cite[Theorem 0.7]{mccullough-trent2000}. 

\begin{corollary}\label{co:th0.7 trent-mccullough}
Suppose that $\MM\subset  \HK\otimes\GG$ is a closed subspace with the property that $(\bmB\otimes I_\GG)(\BB\otimes\MM)\subset\MM$. Then there exists $\GG'$ and a contractive multiplier $G:\Lambda\to\LL(\GG', \GG)$ such that $\mmm_G\mmm_G^*$ is the projection onto $\MM$. In particular, $\MM=\MM_G$.
\end{corollary}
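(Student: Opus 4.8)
The plan is to deduce Corollary~\ref{co:th0.7 trent-mccullough} from Theorem~\ref{th:invariant spaces} by checking that a closed invariant subspace, equipped with its \emph{inherited} Hilbert space norm, satisfies hypothesis (ii) of the theorem. First I would observe that if $\MM\subset\HK\otimes\GG$ is closed and we take on $\MM$ the norm inherited from $\HK\otimes\GG$, then the inclusion $C:\MM\to\HK\otimes\GG$ is an isometry, so $\MM$ is certainly contractively included; moreover $CC^*$ is the orthogonal projection $P_\MM$ onto $\MM$. The hypothesis $(\bmB\otimes I_\GG)(\BB\otimes\MM)\subset\MM$ says exactly that $T(\RR_{C_1})\subset\RR_{C_2}$ for $C_1=I_\BB\otimes C$, $C_2=C$, $T=\bmB\otimes I_\GG$; since $\bmB$ is a contractive multiplier and $C_1,C_2$ are isometries onto closed subspaces, the restriction of $T$ to $\BB\otimes\MM$ is automatically a contraction from $\BB\otimes\MM$ into $\MM$ (it is the restriction of a contraction between the ambient spaces, and both inherited norms coincide with the ambient ones). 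Hence condition (ii) of Theorem~\ref{th:invariant spaces} holds.

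Next I would invoke the implication (ii)$\Rightarrow$(iii) of Theorem~\ref{th:invariant spaces} to obtain a Hilbert space $\GG'$ and a contractive multiplier $G:\Lambda\to\LL(\GG',\GG)$ with $\MM=\MM_G$. The proof of that implication actually produces $G$ so that $\mmm_G\mmm_G^*=CC^*$ (this is the conclusion ``$\mmm_G\mmm_G^*=CC^*$'' reached just before the appeal to Lemma~\ref{le:same range space}). Since here $CC^*=P_\MM$, we get $\mmm_G\mmm_G^*=P_\MM$, which is the assertion of the corollary; the statement $\MM=\MM_G$ then follows, either directly from Lemma~\ref{le:same range space} or by noting that a range norm coming from a projection is just the restricted Hilbert space norm.

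The only point requiring a little care — and the nearest thing to an obstacle — is making sure that the inherited norm on a closed subspace is the ``right'' structure to feed into Theorem~\ref{th:invariant spaces}: one must check that with this norm $\bmB\otimes I_\GG$ really maps $\BB\otimes\MM$ \emph{contractively} to $\MM$, not merely boundedly. This is immediate because on $\BB\otimes\MM$ (with its inherited norm, which equals the norm of $\BB\otimes(\HK\otimes\GG)$ restricted to the closed subspace $\BB\otimes\MM$) the operator agrees with $\bmB\otimes I_\GG$ on the big space, which is a contraction by Lemma~\ref{le:the easy part of the subspace representation} applied with trivial $\GG$ (or directly from $B$ being a contractive multiplier). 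So the corollary follows with essentially no new computation beyond identifying $CC^*$ with $P_\MM$.
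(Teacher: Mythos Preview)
Your argument is correct and follows essentially the same route as the paper's proof: verify that a closed invariant subspace with its inherited norm satisfies condition (ii) of Theorem~\ref{th:invariant spaces}, then read off $\mmm_G\mmm_G^*=CC^*=P_\MM$ from the proof of (ii)$\Rightarrow$(iii) (the paper phrases this last step via Lemma~\ref{le:same range space}, but it is the same identification). Your added remarks about why the restriction of $\bmB\otimes I_\GG$ is contractive are just making explicit what the paper leaves implicit.
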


\begin{proof}
Since $\bmB$ is contractive, $\bmB\otimes I_\GG:\BB\otimes\HK\otimes \GG\to\HK\otimes\GG$ is also contractive. As $\MM$ is a closed subspace of $\HK\otimes \GG$,  it follows that  $\MM$ satisfies condition (ii) of Theorem~\ref{th:invariant spaces}. 
Thus Theorem~\ref{th:invariant spaces} provides us with the contractive multiplier $G$. By Lemma~\ref{le:same range space}, we must have $\mmm_G\mmm_G^*$ equal to the orthogonal projection onto $\MM$.
\end{proof}

Contractive multipliers $G$ which have the property that $\mmm_G$ is a partial isometry are called in~\cite{mccullough-trent2000} \emph{inner} multipliers.

One can also obtain an analogue of~\cite[Theorem 0.14]{mccullough-trent2000}. Since the proof is very similar,  we just give the corresponding statement.

\begin{theorem}\label{th:two subspaces contractively included}
Suppose $\MM_{G_1}\Subset \MM_{G_2}$. Then there exists a contractive multiplier $\Gamma:\Lambda\to \LL(\GG'_1, \GG'_2)$, such that $G_1=G_2\Gamma$.  
\end{theorem}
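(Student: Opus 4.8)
Throughout write $G_i\colon\Lambda\to\LL(\GG'_i,\GG)$, so that $\MM_{G_i}=\RR_{\mmm_{G_i}}$. My plan is first to reduce the statement to producing a contractive multiplier $\Gamma\colon\Lambda\to\LL(\GG'_1,\GG'_2)$ with $\mmm_{G_2}\mmm_\Gamma=\mmm_{G_1}$, and then to obtain such a $\Gamma$ from Theorem~\ref{th:invariant spaces} along the lines of the proof of that theorem. The reduction is legitimate because $\mmm_{G_2}\mmm_\Gamma=\mmm_{G_2\Gamma}$ — checked on the total set $\{\eta\otimes k_\lambda\}$, exactly as in the commutation computation preceding Lemma~\ref{le:the easy part of the subspace representation} — and because $\mmm_A=\mmm_B$ forces $A(\lambda)=B(\lambda)$ wherever $k_\lambda\neq0$ (read off $\mmm_A^*(\eta\otimes k_\lambda)=A(\lambda)^*\eta\otimes k_\lambda$), while at the remaining points ($k_\lambda=0$, i.e.\ $\delta(\lambda)=0$) one has $K(\cdot,\lambda)\equiv0$, so the values there are immaterial.

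The first step of the construction is Lemma~\ref{le:contraction and commutation on range spaces}(2), applied with $\CC_i=\GG'_i\otimes\HK$, $\HH_1=\HH_2=\GG\otimes\HK$, $C_i=\mmm_{G_i}$ and $T=I$: clause (i) there is precisely the hypothesis $\MM_{G_1}\Subset\MM_{G_2}$, so clauses (ii) and (iii) yield $\mmm_{G_1}\mmm_{G_1}^*\le\mmm_{G_2}\mmm_{G_2}^*$ and a contraction $D\colon\GG'_1\otimes\HK\to\GG'_2\otimes\HK$ with $\mmm_{G_1}=\mmm_{G_2}D$; by Douglas' lemma $D$ may be chosen with $\ker D=\ker\mmm_{G_1}$ and $\overline{\Image D}\subseteq\ker\mmm_{G_2}^\perp$. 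The heart of the argument is that $D$ is itself the multiplication operator of a contractive multiplier, and I would prove this by applying Theorem~\ref{th:invariant spaces} (with $\GG$ there replaced by $\GG'_2$) to $\MM:=\RR_D+\ker\mmm_{G_2}\subseteq\GG'_2\otimes\HK$, given the Hilbert norm for which its two summands — ambient-orthogonal by the choice of $D$ — are orthogonal, with the range norm of $D$ on $\RR_D$ and the ambient norm on $\ker\mmm_{G_2}$. Since $\mmm_{G_2}$ maps $\ker\mmm_{G_2}^\perp$ unitarily onto $\MM_{G_2}$ (in its range norm), one gets $\MM\Subset\GG'_2\otimes\HK$ and that $\mmm_{G_2}$ restricts to a coisometry of $\MM$ onto $\MM_{G_1}$; and using the commutation $(I_\GG\otimes\mmm_B)(\mmm_{G_2}\otimes I_\BB)=\mmm_{G_2}(I_{\GG'_2}\otimes\mmm_B)$, the complete invariance of $\MM_{G_1}$, and the fact that $\ker\mmm_{G_2}$ is invariant under $\bmB\otimes I_{\GG'_2}$, one checks that $\MM$ satisfies condition (ii) of Theorem~\ref{th:invariant spaces}. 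That theorem then supplies a contractive multiplier $\Gamma\colon\Lambda\to\LL(\GG'_1,\GG'_2)$ with $\MM_\Gamma=\MM$ and, from the construction in its proof, $\mmm_\Gamma\mmm_\Gamma^*=CC^*$, the range operator of $\MM$, where $C\colon\MM\hookrightarrow\GG'_2\otimes\HK$ is the inclusion.

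It then remains to verify $\mmm_{G_2}\mmm_\Gamma=\mmm_{G_1}$, equivalently $\mmm_\Gamma^*(G_2(\lambda)^*\eta\otimes k_\lambda)=G_1(\lambda)^*\eta\otimes k_\lambda$ for all $\eta\in\GG$. Here $G_2(\lambda)^*\eta\otimes k_\lambda=\mmm_{G_2}^*(\eta\otimes k_\lambda)$ lies in $\ker\mmm_{G_2}^\perp$; I would substitute the explicit formula $\Gamma(\lambda)^*\zeta=\frac{1}{\overline{\delta(\lambda)}}X^*(k_\lambda\otimes\zeta)$ from the proof of Theorem~\ref{th:invariant spaces} (where $XX^*=CC^*-(\bmB\otimes I_{\GG'_2})(I_\BB\otimes CC^*)(\bmB^*\otimes I_{\GG'_2})$), use $(1-B(\mu)B(\lambda)^*)K(\mu,\lambda)=\delta(\mu)\overline{\delta(\lambda)}$ together with the coisometry property of $\mmm_{G_2}$ on $\MM$ and the fact that the adjoint of the inclusion $\MM_{G_1}\hookrightarrow\MM_{G_2}$ carries the reproducing kernel of $\MM_{G_2}$ at a point to that of $\MM_{G_1}$ at the same point, and then compare the inner products of the vectors $X^*(k_\lambda\otimes G_2(\lambda)^*\eta)$; this reduces the claim to the identity $X^*(k_\lambda\otimes G_2(\lambda)^*\eta)=\overline{\delta(\lambda)}\,G_1(\lambda)^*\eta$, which is exactly what is wanted. (So that this is a genuine equality rather than one up to a unitary identification, take $\GG'_1$ minimal for $G_1$; this costs nothing.)

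The step I expect to be the main obstacle is precisely this last one. Theorem~\ref{th:invariant spaces} delivers only $\mmm_\Gamma\mmm_\Gamma^*=CC^*$, which is strictly weaker than the operator identity $\mmm_{G_2}\mmm_\Gamma=\mmm_{G_1}$ encoding the pointwise factorisation $G_1=G_2\Gamma$ (two operators with the same $SS^*$ differ by a partial isometry on the right); so the theorem cannot be used as a black box, and one must follow its construction and track the multiplier produced by the ``division by~$\delta$''. Equivalently, the real content is that the canonical $D$ already intertwines $\bmB$, hence every multiplier, and establishing this is exactly where one must use the asymmetry that $\ker\mmm_{G_2}$ is invariant under $\bmB\otimes I_{\GG'_2}$ while $\ker\mmm_{G_2}^\perp$ is only co-invariant — which is why $\RR_D$ has to be enlarged by $\ker\mmm_{G_2}$ before Theorem~\ref{th:invariant spaces} can be applied. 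The verification that $\MM$ satisfies condition (ii) of Theorem~\ref{th:invariant spaces}, the identification of the auxiliary domain with $\GG'_1$, and the handling of the base points with $\delta(\lambda)=0$ are routine by comparison.
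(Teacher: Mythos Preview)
The paper does not actually give a proof of this theorem; it only says ``since the proof is very similar [to Theorem~0.14 of McCullough--Trent], we just give the corresponding statement.'' So there is no paper proof to compare against line by line. That said, your proposal does have a genuine gap, and it is not the one you flag as the main obstacle.

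The step you call ``routine'' --- verifying that $\MM=\RR_D+\ker\mmm_{G_2}$ (with your norm) satisfies condition~(ii) of Theorem~\ref{th:invariant spaces} --- does not go through. Take $\zeta\in\BB\otimes\RR_D$ and write $(\bmB\otimes I_{\GG'_2})\zeta=u+v$ with $u\in\ker\mmm_{G_2}^\perp$ (which you argue lies in $\RR_D$) and $v\in\ker\mmm_{G_2}$. You need
\[
\|u\|_{\RR_D}^2+\|v\|^2\le \|\zeta\|^2_{\BB\otimes\RR_D}.
\]
Complete invariance of $\MM_{G_1}$ together with the unitarity of $\mmm_{G_2}|_{\RR_D}\to\MM_{G_1}$ gives $\|u\|_{\RR_D}\le\|\zeta\|_{\BB\otimes\RR_D}$, and contractivity of $\bmB$ in the ambient norm gives $\|u\|_{\GG'_2\otimes\HK}^2+\|v\|^2\le\|\zeta\|_{\GG'_2\otimes\HK}^2\le\|\zeta\|_{\BB\otimes\RR_D}^2$. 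But since $\|u\|_{\RR_D}\ge\|u\|_{\GG'_2\otimes\HK}$, these two estimates do not combine to yield the displayed inequality. In operator terms, you need $(\bmB\otimes I)(I_\BB\otimes DD^*)(\bmB^*\otimes I)\le DD^*+P_{\ker\mmm_{G_2}}$, and all you can extract from the hypotheses is $\mmm_{G_2}[(\bmB\otimes I)(I_\BB\otimes DD^*)(\bmB^*\otimes I)]\mmm_{G_2}^*\le \mmm_{G_2}DD^*\mmm_{G_2}^*$; without $\mmm_{G_2}$ bounded below on $\ker\mmm_{G_2}^\perp$ this does not descend. The ambient norm you put on $\ker\mmm_{G_2}$ is, in effect, too large relative to the $\RR_D$-norm for the cross terms to be absorbed.

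Your own ``main obstacle'' is also unresolved: Theorem~\ref{th:invariant spaces} hands you only $\mmm_\Gamma\mmm_\Gamma^*=CC^*$, and your sketch of how to upgrade this to $\mmm_{G_2}\mmm_\Gamma=\mmm_{G_1}$ is not a proof. Note, however, that the route you are trying to force does work cleanly in the special case $\ker\mmm_{G_2}=\{0\}$: then from $\mmm_{G_2}D=\mmm_{G_1}$ and the commutation of multipliers one gets $\mmm_{G_2}\bigl[D(I_{\GG'_1}\otimes\bmB)-(I_{\GG'_2}\otimes\bmB)(I_\BB\otimes D)\bigr]=0$, hence $D$ intertwines $\bmB$, hence $D=\mmm_\Gamma$ directly. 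The difficulty is entirely in handling $\ker\mmm_{G_2}$, and enlarging $\RR_D$ by it is not the right fix. The McCullough--Trent approach the paper alludes to bypasses the Douglas solution $D$ altogether and runs a lurking-isometry/kernel argument in the style of the proof of Theorem~\ref{th:invariant spaces}, starting from the positivity $\mmm_{G_2}\mmm_{G_2}^*-\mmm_{G_1}\mmm_{G_1}^*\ge 0$ and the Pick identity $(1-B(\mu)B(\lambda)^*)K(\mu,\lambda)=\delta(\mu)\overline{\delta(\lambda)}$, producing $\Gamma$ with $G_2\Gamma=G_1$ directly rather than trying to recognise a pre-existing operator as a multiplier.
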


\begin{remark}\label{re:relation to other}
To relate the above results to the statements in~\cite{mccullough-trent2000}, note that our function $\beta$ is written therein in coordinates; more precisely, if one chooses an orthonormal basis $(e_i)$ in $\BB$, then the functions $b_i$ that appear in~\cite{mccullough-trent2000} are related to our $\beta$ by the equality $b_i(\lambda)= \< e_i ,\beta(\lambda)\>$. The relation to~\cite{ball-bolotnikov-fang2007} is even simpler, since in that case the basis $(e_i)$ is already explicit in the definition of the reproducing kernel. 
\end{remark}

\section{Complementary subspaces in Pick spaces}\label{se:complementary subspaces}

\subsection{Complementary subspaces of ranges of multipliers}

Once we have identified the completely invariant subspaces of Pick spaces, we may go further and discuss their complementary subspaces. For this we need to know more about the structure of contractive multipliers. We pick the results we need from~\cite{ambrozie-timotin-ieot2002}.

For a Hilbert space $\XX$ and $\lambda\in \Lambda$,  define $Z_\XX(\lambda):\BB\otimes \XX\to \XX$ by $Z_\XX(\lambda)=B(\lambda)\otimes I_\XX$.
%
%
Suppose that $G:\Lambda\to\LL(\GG', \GG)$ is a contractive multiplier. Then there exists a Hilbert space $\XX$ and a coisometry $U:\XX\oplus \GG'\to (\BB\otimes\XX)\oplus \GG$ whose matrix with respect to the above decomposition is
\[
U=\begin{pmatrix}
\bm a & \bm b\\ \bm c & \bm d
\end{pmatrix}
\]
such that 
\begin{equation}\label{eq:representation of G}
G(\lambda)=\bm d+ \bm c (I-Z_\XX(\lambda)\bm a)^{-1} Z_\XX(\lambda)\bm b.
\end{equation}
Denote also 
\[
\OO(\lambda)=(I-\bm a^*Z_\XX(\lambda)^*)^{-1}\bm c^*:\GG\to \XX.
\]


\begin{lemma}\label{le:formula for psi/gamma}
Define $\gamma:\XX\to \HK\otimes \GG$ by the formula $\gamma^*(k_\lambda\otimes y)=\overline{\delta(\lambda)}\OO(\lambda) y$. Then
\[
\mmm_G\mmm_G^*+\gamma\gamma^*=I_{\HK\otimes \GG}.
\]
\end{lemma}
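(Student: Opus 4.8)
The statement is a resolution-of-identity type identity, and the natural strategy is to verify it on the total set of vectors $k_\lambda\otimes y$ with $\lambda\in\Lambda$, $y\in\GG$; that is, to show
\[
\langle \mmm_G\mmm_G^*(k_\lambda\otimes y),\, k_\mu\otimes z\rangle + \langle \gamma\gamma^*(k_\lambda\otimes y),\, k_\mu\otimes z\rangle = \langle k_\lambda\otimes y,\, k_\mu\otimes z\rangle = K(\mu,\lambda)\langle y,z\rangle
\]
for all $\lambda,\mu,y,z$. The first term is computed from $\mmm_G^*(k_\lambda\otimes y)=G(\lambda)^*y\otimes k_\lambda$, giving $\langle \mmm_G\mmm_G^*(k_\lambda\otimes y), k_\mu\otimes z\rangle = \langle G(\mu)G(\lambda)^*y,z\rangle K(\mu,\lambda)$. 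The second term, from the defining formula for $\gamma$, is $\overline{\delta(\lambda)}\delta(\mu)\langle \OO(\mu)^*\OO(\lambda)y, z\rangle$ — here one must be mildly careful: $\gamma^*(k_\lambda\otimes y)=\overline{\delta(\lambda)}\OO(\lambda)y$ lives in $\XX$, so $\langle\gamma\gamma^*(k_\lambda\otimes y),k_\mu\otimes z\rangle = \langle\gamma^*(k_\lambda\otimes y),\gamma^*(k_\mu\otimes z)\rangle_\XX = \overline{\delta(\lambda)}\delta(\mu)\langle\OO(\lambda)y,\OO(\mu)z\rangle_\XX$. So after dividing by $K(\mu,\lambda)=\dfrac{\delta(\mu)\overline{\delta(\lambda)}}{1-\langle\beta(\lambda),\beta(\mu)\rangle}$, what remains to be shown is the pointwise (in $\lambda,\mu$) operator identity on $\GG$:
\[
G(\mu)G(\lambda)^* + (1-\langle\beta(\lambda),\beta(\mu)\rangle)\,\OO(\mu)^*\OO(\lambda) = I_\GG.
\]

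The second and main step is to prove this operator identity, and the key input is the coisometry condition $UU^*=I$, i.e.
\[
\bm a\bm a^* + \bm b\bm b^* = I_{\BB\otimes\XX},\qquad \bm a\bm c^* + \bm b\bm d^* = 0,\qquad \bm c\bm c^* + \bm d\bm d^* = I_\GG.
\]
Writing $Z=Z_\XX(\lambda)$, $W=Z_\XX(\mu)$ for brevity, one has $W^*Z = (B(\mu)\otimes I_\XX)^*(B(\lambda)\otimes I_\XX) = B(\mu)^*B(\lambda)\otimes I_\XX$, and $B(\mu)^*B(\lambda) = B(\mu)^*1\cdot\overline{B(\lambda)^*1}$... more precisely $B(\lambda)\colon\BB\to\bbC$ with $B(\lambda)^*1=\beta(\lambda)$, so $B(\mu)B(\lambda)^*$ acting on $\bbC$ is multiplication by $\langle\beta(\lambda),\beta(\mu)\rangle$, and $W^*Z$ as an operator on $\BB\otimes\XX$ equals the scalar $\langle\beta(\lambda),\beta(\mu)\rangle$ times a suitable rank-controlled operator — in fact $Z^*Z = \|\beta(\lambda)\|^2 I_\XX$ type identities will surface. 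From $G(\lambda)=\bm d + \bm c(I-Z\bm a)^{-1}Z\bm b$ and $\OO(\lambda)=(I-\bm a^*Z^*)^{-1}\bm c^*$, substitute and expand $G(\mu)G(\lambda)^*$; using the three coisometry relations repeatedly to collapse $\bm b\bm b^* = I-\bm a\bm a^*$ and $\bm b\bm d^* = -\bm a\bm c^*$, and the resolvent identity $(I-Z\bm a)^{-1}Z = Z(I-\bm a Z)^{-1}$, one should be able to telescope the expression down to $I_\GG - (1 - W^*Z\text{-scalar})\OO(\mu)^*\OO(\lambda)$. This is precisely the colligation/transfer-function computation that shows a unitary colligation produces a contractive multiplier with a de Branges--Rovnyak complementary space of the stated form; it is done in \cite{ambrozie-timotin-ieot2002} and I would quote the relevant identity from there if possible rather than redo it.

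The third step is bookkeeping: confirm that $\gamma$ is genuinely bounded (equivalently that $\gamma\gamma^*$ is a well-defined bounded operator), which follows once the identity $\mmm_G\mmm_G^* + \gamma\gamma^* = I$ is established on a total set, since the left term is bounded and the right side is $I$, forcing $\gamma\gamma^* = I - \mmm_G\mmm_G^*\le I$ to be bounded and positive; then $\gamma$ exists as its positive square root (up to the ambiguity of the choice of $\XX$, which the formula for $\gamma^*$ pins down consistently). One subtlety worth checking is that the vectors $\overline{\delta(\lambda)}\OO(\lambda)y$ do define a bona fide densely-defined adjointable operator $\gamma^*$ — i.e. that $\sum_i \overline{\delta(\lambda_i)}\OO(\lambda_i)y_i = 0$ whenever $\sum_i k_{\lambda_i}\otimes y_i = 0$ — but this is automatic from the identity being verified on all finite linear combinations, so it is cleanest to phrase the whole proof as: the sesquilinear form $(k_\lambda\otimes y, k_\mu\otimes z)\mapsto \overline{\delta(\lambda)}\delta(\mu)\langle\OO(\lambda)y,\OO(\mu)z\rangle$ equals $\langle(I-\mmm_G\mmm_G^*)(k_\lambda\otimes y),k_\mu\otimes z\rangle$, hence is a bounded positive form, hence of the form $\gamma\gamma^*$ with $\gamma$ as described.

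\textbf{Main obstacle.} The only real work is the pointwise operator identity $G(\mu)G(\lambda)^* + (1-\langle\beta(\lambda),\beta(\mu)\rangle)\OO(\mu)^*\OO(\lambda) = I_\GG$; this is a somewhat delicate algebraic manipulation with the four blocks of $U$ and two resolvents $(I-Z_\XX(\lambda)\bm a)^{-1}$, $(I-Z_\XX(\mu)\bm a)^{-1}$, and getting the scalar $\langle\beta(\lambda),\beta(\mu)\rangle$ to appear in exactly the right place requires care with the tensor structure of $Z_\XX(\lambda)$. I expect to lean on \cite{ambrozie-timotin-ieot2002} for this identity rather than reproving it from scratch.
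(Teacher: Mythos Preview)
Your reduction to the pointwise identity
\[
G(\mu)G(\lambda)^* + \bigl(1-\langle\beta(\lambda),\beta(\mu)\rangle\bigr)\,\OO(\mu)^*\OO(\lambda) = I_\GG
\]
is exactly right, and matches the paper's route. The difference is in how that identity is established. You anticipate a ``somewhat delicate algebraic manipulation'' with the block relations $\bm a\bm a^*+\bm b\bm b^*=I$, $\bm a\bm c^*+\bm b\bm d^*=0$, $\bm c\bm c^*+\bm d\bm d^*=I$ and two resolvents, or else a citation to~\cite{ambrozie-timotin-ieot2002}. The paper instead uses a one-step ``lurking isometry'' argument: from $G(\lambda)^*=\bm d^*+\bm b^*Z_\XX(\lambda)^*\OO(\lambda)$ and $\OO(\lambda)=\bm c^*+\bm a^*Z_\XX(\lambda)^*\OO(\lambda)$, together with $Z_\XX(\lambda)^*x=\beta(\lambda)\otimes x$, one has
\[
\begin{pmatrix}\OO(\lambda)y\\ G(\lambda)^*y\end{pmatrix}
= U^*\begin{pmatrix}\beta(\lambda)\otimes\OO(\lambda)y\\ y\end{pmatrix}.
\]
Since $U$ is a coisometry, $U^*$ is an isometry, so taking the inner product of both sides (at $\lambda$ and $\mu$) immediately yields
\[
\langle\OO(\lambda)y,\OO(\mu)z\rangle + \langle G(\lambda)^*y,G(\mu)^*z\rangle
= \langle\beta(\lambda),\beta(\mu)\rangle\langle\OO(\lambda)y,\OO(\mu)z\rangle + \langle y,z\rangle,
\]
which is your target identity. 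No resolvent expansion, no piecewise use of the three block equations, and no external reference is needed; the coisometry property is used \emph{globally} rather than entry by entry. This also dissolves what you flag as the ``main obstacle'': the scalar $\langle\beta(\lambda),\beta(\mu)\rangle$ appears in the right place automatically from the tensor leg of the inner product on $\BB\otimes\XX$.

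Your third step (boundedness of $\gamma$) is handled implicitly in the paper: once the sesquilinear-form identity holds on the total set, $\gamma\gamma^*=I-\mmm_G\mmm_G^*$ is bounded and positive, exactly as you say; the paper does not dwell on this.
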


\begin{proof}
From~\eqref{eq:representation of G} it follows that
\[
G(\lambda)^*=\bm d^*+\bm b^*Z_\XX(\lambda)^* \OO(\lambda),
\]
while the definition of $\OO$ yields
\[
\OO(\lambda)=\bm c^* + \bm a^* Z_\XX(\lambda)^* \OO(\lambda).
\]
Since $U^*$ is an isometry, the last two relations say (using also the definition of $Z_\XX(\lambda)^*$) that, for $\lambda,\mu\in \Lambda$ and $y, z\in \GG$,
\[
\left\langle
\begin{pmatrix}
\OO(\lambda)y\\ G(\lambda)^* y
\end{pmatrix},
\begin{pmatrix}
\OO(\mu)z\\ G(\mu)^* z
\end{pmatrix}
\right\rangle=
\left\langle
\begin{pmatrix}
\beta(\lambda)\otimes \OO(\lambda)y \\
y
\end{pmatrix},
\begin{pmatrix}
\beta(\mu)\otimes \OO(\mu) z \\ z
\end{pmatrix}
\right\rangle.
\]
This is equivalent to
\[
\<\OO(\lambda)y, \OO(\mu)z\>+\<G(\lambda)^* y,  G(\mu)^* z\>=
\<\beta(\lambda), \beta(\mu)\> \<\OO(\lambda)y, \OO(\mu)z\>+\<y, z\>,
\]
or, using the definitions of $\gamma$ and $\beta$, 
\[
\<\gamma^*(k_\lambda\otimes y), \gamma^*(k_\mu\otimes z)\>
+\<\mmm_G^*(k_\lambda\otimes y), \mmm_G^*(k_\mu\otimes z)\>=
\<k_\lambda\otimes y, k_\mu\otimes z\>.
\]
The proof is finished.
\end{proof}

\begin{corollary}\label{co:M_Gsharp}
With the above notations, $\MM_G^\sharp=\RR_\gamma$.
\end{corollary}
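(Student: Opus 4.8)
The plan is to combine Lemma~\ref{le:formula for psi/gamma} with Lemma~\ref{le:complementary space as image of defect}. Recall that Lemma~\ref{le:complementary space as image of defect} tells us that if a contractively included subspace is written as $\RR_C$, then its complementary subspace is $\RR_{(I-CC^*)^{1/2}}$. Applying this to $C=\mmm_G$, whose range (with the range norm) is by definition $\MM_G$, we get $\MM_G^\sharp=\RR_{(I_{\HK\otimes\GG}-\mmm_G\mmm_G^*)^{1/2}}$. By Lemma~\ref{le:formula for psi/gamma}, $I_{\HK\otimes\GG}-\mmm_G\mmm_G^*=\gamma\gamma^*$, so the defect operator equals $(\gamma\gamma^*)^{1/2}$, and hence $\MM_G^\sharp=\RR_{(\gamma\gamma^*)^{1/2}}$.

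It then remains to observe that $\RR_{(\gamma\gamma^*)^{1/2}}=\RR_\gamma$ with equality of norms. This is precisely the content of Lemma~\ref{le:same range space}: the operators $\gamma$ and $(\gamma\gamma^*)^{1/2}$ satisfy $\gamma\gamma^*=(\gamma\gamma^*)^{1/2}\bigl((\gamma\gamma^*)^{1/2}\bigr)^*$, since $(\gamma\gamma^*)^{1/2}$ is self-adjoint. Here one should note that $\gamma$ is automatically a contraction: from $\mmm_G\mmm_G^*+\gamma\gamma^*=I$ and $\mmm_G\mmm_G^*\ge0$ we get $\gamma\gamma^*\le I$, so $\|\gamma\|\le1$, which is what is needed to speak of $\RR_\gamma$ as a contractively included subspace and to invoke Lemma~\ref{le:same range space}.

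I do not expect any real obstacle here; the corollary is essentially a formal consequence of the two cited lemmas once one matches notation. The only point requiring the tiniest bit of care is checking that $\gamma$ is a contraction (so that the range-space machinery applies) and that $(\gamma\gamma^*)^{1/2}$ is legitimately a contraction as well, both of which follow immediately from the identity in Lemma~\ref{le:formula for psi/gamma}. I would write the proof in two sentences: one invoking Lemma~\ref{le:complementary space as image of defect} together with Lemma~\ref{le:formula for psi/gamma} to identify $\MM_G^\sharp=\RR_{(\gamma\gamma^*)^{1/2}}$, and one invoking Lemma~\ref{le:same range space} to conclude $\RR_{(\gamma\gamma^*)^{1/2}}=\RR_\gamma$.
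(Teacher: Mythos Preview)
Your proof is correct and matches the paper's argument essentially line for line: apply Lemma~\ref{le:complementary space as image of defect} with $C=\mmm_G$, use Lemma~\ref{le:formula for psi/gamma} to rewrite the defect, and conclude via Lemma~\ref{le:same range space}. The extra remark that $\gamma$ is a contraction (from $\gamma\gamma^*\le I$) is a welcome clarification that the paper leaves implicit.
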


\begin{proof}
Since $\MM_G=\RR_{\mmm_G}$, it follows from Lemma~\ref{le:complementary space as image of defect} that $\MM_G^\sharp=\RR_{(I-\mmm_G\mmm_G^*)^{1/2}}$. But Lemma~\ref{le:formula for psi/gamma} says that 
\[
\gamma\gamma^*=I-\mmm_G\mmm_G^*= (I-\mmm_G\mmm_G^*)^{1/2}(I-\mmm_G\mmm_G^*)^{1/2},
\]
whence $\RR_\gamma=\RR_{(I-\mmm_G\mmm_G^*)^{1/2}}$ by Lemma~\ref{le:same range space}.
\end{proof}

For further use, let us note that we have
\[
\<\gamma(x)(\lambda), y\>= \< \gamma(x),k_\lambda\otimes y \>
= \< x,\overline{\delta(\lambda)}\OO(\lambda) y \> ,
\]
and thus
\begin{equation}\label{eq:formula for gamma}
\gamma(x)=\delta(\lambda)\OO(\lambda)^*x=
\delta(\lambda) \bm c (I-Z_\XX(\lambda)\bm a)^{-1}x.
\end{equation}

\subsection{Characterization of the subspaces: a special case}\label{se:delta=1} In the sequel we make a more restrictive assumption. Namely, we assume that the reproducing kernel is ``normalized at $\lambda_0$'', which means (see~\cite[Chapter 2.6]{agler-mccarthy-book}) that $\delta\equiv 1$. If we take then $\mu=\lambda=\lambda_0$ in formula~\eqref{eq:pick}, we obtain
\[
 1=\frac{1}{1-\<\beta(\lambda_0),\beta(\lambda_0)\>},
\]
whence $\beta(\lambda_0)=0$. Also, $\pi$ is in this case the orthogonal projection onto the constant function~$\bm 1$.

It follows from Corollary~\ref{co:M_Gsharp} that we also have
\[
 (\BB\otimes\MM_G)^\sharp= \RR_{I_\BB\otimes \gamma}.
\]
We apply then Lemma~\ref{le:contraction and commutation on range spaces} (1) to the case $\CC_1=\XX$, $\CC_2=\BB\otimes\XX$, $\HH_1=\HK\otimes\GG$, $\HH_2=\BB\otimes \HK\otimes\GG$, and $D=\bm a$. As a result, we obtain that there exists a unique contraction 
\[
 \tbB: \MM_G^\sharp\to (\BB\otimes\MM_G)^\sharp
\]
that verifies
\begin{equation}\label{eq:definition of tilde B}
\tbB \gamma= (I_\BB\otimes\gamma)\bm a.
\end{equation}

The reason to introduce the operator $\tbB$ is the following. Theorem~\ref{th:invariant spaces} characterizes the contractively included subspaces of $\HK\otimes G$ that satisfy $(\bmB\otimes I_\GG)(\BB\otimes \MM)\subset \MM$ as range spaces of multipliers. It seems tempting to search  a similar characterization for the complementary subspaces of range spaces of multipliers, and the natural candidate would be a contractively included subspace $\NN$ that satisfies 
\begin{equation}\label{eq:inclusion with B}
(\bmB^*\otimes I_\GG)\NN\subset \BB\otimes \NN.
\end{equation}
However, in~\cite{ball-bolotnikov-fang2007} there an example is given (in the Drury--Arveson space $\DD(\bbB)$) of a multiplier such that this inclusion is not true for the complementary subspace of its range.  In this context $\tbB$ is a ``replacement'' for $\bmB^*\otimes I_\GG$: we have, by construction, $\tbB(\MM_G^\sharp)\subset (\BB\otimes\MM_G)^\sharp$, and we show in the next Lemma that it also  satisfies some special relations, which will provide the basis for the desired characterization.

\begin{lemma}\label{le:difference equality}
 Suppose $\xi\in \RR_\gamma$. Then
\begin{align}
\label{eq:gleason}
 \xi&=(\bmB\otimes I_\GG)\tbB \xi+ (\pi\otimes I_\GG) \xi;\\
 \label{eq:inequality for difference quotients}
 \|\tbB \xi\|^2_{\BB\otimes\RR_\gamma}&\le \|\xi\|^2_{\RR_\gamma}
-\|(\pi\otimes I_\GG)\xi\|^2_{\HK\otimes\GG}.
\end{align}
\end{lemma}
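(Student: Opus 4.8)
The plan is to verify both relations on the total family of vectors of the form $\gamma(x)$ with $x \in \XX$ (by continuity and since $\RR_\gamma$ is the closure of such elements in the range norm; more precisely $\gamma$ maps $\ker\gamma^\perp$ unitarily onto $\RR_\gamma$), using the explicit formula~\eqref{eq:formula for gamma}, namely $\gamma(x)(\lambda) = \delta(\lambda)\bm c(I - Z_\XX(\lambda)\bm a)^{-1}x$, and in the present normalized setting $\delta \equiv 1$, so $\pi \otimes I_\GG$ is the projection onto $\bm 1 \otimes \GG \cong \GG$, and $B(\lambda_0) = 0$, $\beta(\lambda_0) = 0$.

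For~\eqref{eq:gleason}, I would compute $(\bmB \otimes I_\GG)\tbB\gamma(x)$ using the defining relation~\eqref{eq:definition of tilde B}, $\tbB\gamma = (I_\BB \otimes \gamma)\bm a$, so that $(\bmB \otimes I_\GG)\tbB\gamma(x) = (\bmB \otimes I_\GG)(I_\BB \otimes \gamma)\bm a x$. Evaluating at $\lambda$ and pairing with $y \in \GG$: since $\bmB = \mmm_B$ satisfies $\mmm_B(\xi \otimes k_\lambda \otimes \cdots)$ acting appropriately, the value at $\lambda$ of $(\bmB \otimes I_\GG)(h)$ for $h \in \BB \otimes \HK \otimes \GG$ is obtained by applying $B(\lambda) = \langle \cdot, \beta(\lambda)\rangle$ in the $\BB$-slot; concretely $\big((\bmB\otimes I_\GG)(I_\BB\otimes\gamma)\bm a x\big)(\lambda) = B(\lambda)\big((I_\BB\otimes\gamma)(\bm a x)\big)(\lambda)$. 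Using $\gamma(\cdot)(\lambda) = \bm c(I-Z_\XX(\lambda)\bm a)^{-1}(\cdot)$ componentwise and the identity $Z_\XX(\lambda) = B(\lambda)\otimes I_\XX$, this collapses to $\bm c(I - Z_\XX(\lambda)\bm a)^{-1}Z_\XX(\lambda)\bm a x$. Then one checks the algebraic identity $\bm c(I - Z_\XX(\lambda)\bm a)^{-1}Z_\XX(\lambda)\bm a\, x + \bm c x|_{\lambda = \lambda_0\text{-part}} = \bm c(I - Z_\XX(\lambda)\bm a)^{-1}x = \gamma(x)(\lambda)$, where the middle term $\bm c x$ is exactly $\gamma(x)(\lambda_0) = \big((\pi\otimes I_\GG)\gamma(x)\big)(\lambda)$ since $Z_\XX(\lambda_0) = B(\lambda_0)\otimes I_\XX = 0$. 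This is just the resolvent identity $(I - T)^{-1} = I + (I-T)^{-1}T$ with $T = Z_\XX(\lambda)\bm a$, so~\eqref{eq:gleason} follows for $\xi = \gamma(x)$ and then for all $\xi \in \RR_\gamma$ by linearity and continuity.

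For~\eqref{eq:inequality for difference quotients}, I would use that $\tbB$ is a contraction from $\RR_\gamma = \MM_G^\sharp$ into $(\BB\otimes\MM_G)^\sharp = \RR_{I_\BB\otimes\gamma}$ (this is the content of~\eqref{eq:definition of tilde B} together with Lemma~\ref{le:contraction and commutation on range spaces}(1)), so a priori $\|\tbB\xi\|^2_{\BB\otimes\RR_\gamma} \le \|\xi\|^2_{\RR_\gamma}$. To get the sharper bound with the subtracted term $\|(\pi\otimes I_\GG)\xi\|^2_{\HK\otimes\GG}$, I would proceed on $\xi = \gamma(x)$ with $x \in \ker\gamma^\perp$, where $\|\gamma(x)\|_{\RR_\gamma} = \|x\|_\XX$ and similarly $\|\tbB\gamma(x)\|_{\BB\otimes\RR_\gamma} = \|(I_\BB\otimes\gamma)\bm a x\|_{\BB\otimes\RR_\gamma} \le \|\bm a x\|_{\BB\otimes\XX}$. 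Meanwhile $(\pi\otimes I_\GG)\gamma(x) = \bm 1 \otimes \bm c x$ (evaluating $\gamma(x)$ at $\lambda_0$ as above), so $\|(\pi\otimes I_\GG)\gamma(x)\|^2 = \|\bm c x\|^2_\GG$. Thus it suffices to show $\|\bm a x\|^2 + \|\bm c x\|^2 \le \|x\|^2$, which is precisely the statement that the first column $\begin{pmatrix}\bm a\\ \bm c\end{pmatrix}$ of the coisometry $U$ is a contraction (equivalently, $U^*$ is an isometry, hence $\bm a^*\bm a + \bm c^*\bm c \le I$, in fact $= I$). Passing from $\xi = \gamma(x)$ to general $\xi \in \RR_\gamma$ by continuity, noting that both sides are continuous in the $\RR_\gamma$-norm, completes the proof.

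The main obstacle I anticipate is bookkeeping rather than conceptual: correctly tracking how $\bmB \otimes I_\GG$, $I_\BB \otimes \gamma$, and the evaluation maps interact on the triple tensor product $\BB \otimes \HK \otimes \GG$, in particular making rigorous the informal "value at $\lambda$" manipulations (the paper already flags this "slight abuse of notation"), and making sure the contraction estimate for $\tbB$ on range-space norms is combined correctly with the coisometry property of $U$ — since $\tbB$ being merely a contraction into $\RR_{I_\BB\otimes\gamma}$ gives the weak bound, and one must genuinely use the structure $\tbB\gamma = (I_\BB\otimes\gamma)\bm a$ plus $\|\bm a x\|^2 + \|\bm c x\|^2 \le \|x\|^2$ to extract the extra term.
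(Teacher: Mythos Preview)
Your argument is correct and mirrors the paper's: for~\eqref{eq:gleason} you use the resolvent identity $(I-T)^{-1}=I+(I-T)^{-1}T$ via pointwise evaluation, while the paper carries out the dual computation on adjoints applied to $k_\lambda\otimes y$, and for~\eqref{eq:inequality for difference quotients} both reduce to $\|\bm a x\|^2+\|\bm c x\|^2\le\|x\|^2$ from the contractivity of the coisometry $U$. One harmless slip: your parenthetical ``in fact $=I$'' would require $U$ to be an isometry rather than merely a coisometry, but only the inequality $\le$ is used (the paper's proof states equality at the same spot, with the same caveat).
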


In the classical case, \eqref{eq:inequality for difference quotients} is called the \emph{inequality for difference quotients}.

\begin{proof}
 Suppose $\xi=\gamma x$, with $x\in \XX$. Then
\[
 \xi-(\bmB\otimes I_\GG)\tbB \xi= \gamma x- (\bmB\otimes I_\GG)\tbB\gamma x=
\big(\gamma - (\bmB\otimes I_\GG)(I_\BB\otimes\gamma)\bm a\big)x,
\]
and $(\pi\otimes I_\GG)\xi=(\pi\otimes I_\GG)\gamma x$.  
The operators $D:=\gamma - (\bmB\otimes I_\GG)(I_\BB\otimes\gamma)\bm a $
and $D':= (\pi\otimes I_\GG)\gamma$ both act from $\XX$ to $\HK\otimes \GG$; we will show that they are equal by computing the action of their adjoints on an
 element of the form $k_\lambda\otimes y$. 

According to Lemma~\ref{le:formula for psi/gamma}, we have  $\gamma^*(k_\lambda\otimes y)=\OO(\lambda )y$, and thus, using the definitions of $Z_\XX(\lambda)$ and of $\OO(\lambda)$,
\[
\begin{split}
 D^* (k_\lambda\otimes y)
&=\OO(\lambda )y- \bm a^* (I_\BB\otimes \gamma^*)(\beta(\lambda)\otimes k_\lambda\otimes y)
=\OO(\lambda )y - \bm a^*(\beta(\lambda)\otimes \OO(\lambda )y)\\
&=\OO(\lambda)y- \bm a^*(Z_\XX(\lambda)\OO(\lambda)y=\bm c^* y.
\end{split}
\]

On the other hand,  using the fact that $\bm 1=k_{\lambda_0}$,
\[
D'{}^*(k_\lambda\otimes y)=
 \gamma^* (\pi\otimes I_\GG)(k_\lambda\otimes y)=
\gamma^* ( \<k_\lambda,\bm 1\>\bm 1\otimes y)=
\gamma^*(\bm 1\otimes y)=\OO(\lambda_0)y.
\]
But $\beta(\lambda_0)=0$ implies $Z_\XX(\lambda_0)=0$, and thus $\OO(\lambda_0)=\bm c^*$. 
Thus $D^*=D'{}^*$, $D=D'$, proves~\eqref{eq:gleason}.

To prove~\eqref{eq:inequality for difference quotients}, note that we
have obtained $\gamma^* (\pi\otimes I_\GG)(k_\lambda\otimes y)=\bm c^* y$ for all $\lambda\in\Lambda$ and $y\in\GG$. Taking the scalar product with $z\in \XX$, we have
\[
 \<\gamma^* (\pi\otimes I_\GG)(k_\lambda\otimes y), z\>=\<y, \bm c z\>
=\<k_\lambda\otimes y, \bm 1\otimes \bm c z\>
\]
and thus
\begin{equation}\label{eq:formula for pi otimes I gamma}
 (\pi\otimes I_\GG)\gamma z=\bm 1\otimes\bm c z.
\end{equation}

If $\xi\in\RR_\gamma$, $\xi=\gamma x$ for some $x\in \XX$, we have
\[
 \|\tbB \gamma x\|^2_{\BB\otimes\RR_\gamma} =
\|(I_\BB\otimes\gamma) \bm a x\|^2_{\BB\otimes\RR_\gamma}\le \|\bm ax\|_{\BB\otimes\XX}^2,
\]
the inequality following from the definition of the range norm. But the contractivity of $U$ and~\eqref{eq:formula for pi otimes I gamma} imply that 
\[
 \|\bm ax\|_{\BB\otimes\XX}^2= \|x\|^2_{\XX}-\|\bm cx\|_\GG^2
= \|x\|^2_{\XX}- \|(\pi\otimes I_\GG)\gamma x\|^2_{\HK\otimes\GG}.
\]
Taking the infimum with respect to all $x\in\XX$ such that $\xi=\gamma x$, we obtain~\eqref{eq:inequality for difference quotients}.
\end{proof}

Let us note that from Lemma~\ref{le:delta} it follows that for any $\xi\in\HK\otimes\GG$ we have
\begin{align}
\xi&=(\bmB \bmB^*\otimes I_\GG)\xi+(\pi\otimes I_\GG)\xi,
\label{eq:gleason for the whole space}\\
 \|(\bmB^*\otimes I_\GG)\xi\|_{\HK\otimes\GG}^2&=\|\xi\|_{\HK\otimes\GG}^2-\|(\pi\otimes I_\GG)\xi\|^2_{\HK\otimes\GG}.
\label{eq:diff quotients for the whole space}
\end{align}
These equalities should be compared to~\eqref{eq:gleason} and~\eqref{eq:inequality for difference quotients}.

Lemma~\ref{le:difference equality} is the basis for  a characterization of contractively contained subspaces of $\HK\otimes\GG$ that are complementary spaces of contractively included completely invariant spaces.

\begin{theorem}\label{th:characterization of complementary subspaces}
 Suppose $K$ is a Pick kernel, $\GG$ is a Hilbert space, and $\NN$ is a contractively included subspace of $\HK\otimes\GG$.
 The following are equivalent:

(i) There exists a contractively contained subspace $\MM_G$ such that $\NN=\MM_G^\sharp$.

(ii) There exists a contraction $\tbB: \NN\to \BB\otimes\NN$, such that for any $\xi\in\NN$ are satisfied the relations
\begin{align}
\label{eq:gleason2}
 \xi&=(\bmB\otimes I_\GG)\tbB \xi+ (\pi\otimes I_\GG) \xi;\\
 \label{eq:inequality for difference quotients2}
 \|\tbB \xi\|^2_{\BB\otimes\NN}&\le \|\xi\|^2_{\NN}
-\|(\pi\otimes I_\GG)\xi\|^2_{\HK\otimes\GG}.
\end{align}

If $\NN$ is isometrically included in $\HK\otimes\GG$, then $\tbB=(\bmB^*\otimes I_\GG)|\NN$.
\end{theorem}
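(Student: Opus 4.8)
The plan is to treat the two implications separately: $(i)\Rightarrow(ii)$ is essentially already contained in Section~\ref{se:complementary subspaces}, while $(ii)\Rightarrow(i)$ is the substance and amounts to manufacturing a coisometric colligation out of $\tbB$. For $(i)\Rightarrow(ii)$: if $\NN=\MM_G^\sharp$, then Corollary~\ref{co:M_Gsharp} identifies $\NN$ with $\RR_\gamma$, and the contraction $\tbB\colon\MM_G^\sharp\to(\BB\otimes\MM_G)^\sharp$ of~\eqref{eq:definition of tilde B}, read through the identifications $(\BB\otimes\MM_G)^\sharp=\RR_{I_\BB\otimes\gamma}=\BB\otimes\RR_\gamma=\BB\otimes\NN$, becomes a contraction $\NN\to\BB\otimes\NN$; Lemma~\ref{le:difference equality} then yields~\eqref{eq:gleason2} and~\eqref{eq:inequality for difference quotients2} for every $\xi\in\RR_\gamma=\NN$, and nothing further is needed.

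For $(ii)\Rightarrow(i)$, I would take the state space to be $\XX:=\NN$, with $\gamma:=C\colon\NN\to\HK\otimes\GG$ the (contractive, injective) inclusion, so that $\RR_C=\NN$. Since $\delta\equiv 1$, Lemma~\ref{le:delta} says $\pi$ is the projection onto $\bm 1$, so $(\pi\otimes I_\GG)\xi$ lies in $\bm 1\otimes\GG$ and one may define $\bm c\colon\NN\to\GG$ by $\bm 1\otimes\bm c\xi:=(\pi\otimes I_\GG)\xi$ (whence $\bm c^*y=C^*(\bm 1\otimes y)$), and set $\bm a:=\tbB$. Inequality~\eqref{eq:inequality for difference quotients2} is exactly the assertion that the column operator $\binom{\bm a}{\bm c}\colon\XX\to(\BB\otimes\XX)\oplus\GG$ is a contraction, so it can be completed to a coisometry $U$ with blocks $\bm a,\bm b,\bm c,\bm d$ (taking $\binom{\bm b}{\bm d}$ a square root of the defect $I-\binom{\bm a}{\bm c}\binom{\bm a}{\bm c}^*$). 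Because $\|\beta(\lambda)\|<1$ and $\|\bm a\|\le1$, the operators $I-Z_\XX(\lambda)\bm a$ and $I-\bm a^*Z_\XX(\lambda)^*$ are invertible, so~\eqref{eq:representation of G} defines a function $G\colon\Lambda\to\LL(\GG',\GG)$, which is a contractive multiplier (this is the converse of the realization statement recalled before Lemma~\ref{le:formula for psi/gamma}; alternatively it follows from Lemma~\ref{le:kernel condition for contractive multipliers} applied to the Gram identity obtained in the proof of Lemma~\ref{le:formula for psi/gamma}). By that lemma and Corollary~\ref{co:M_Gsharp}, $\MM_G^\sharp=\RR_{\gamma'}$ with $\gamma'^*(k_\lambda\otimes y)=\OO(\lambda)y$, so it suffices to prove $\gamma'=C$, which then gives $\MM_G^\sharp=\RR_{\gamma'}=\RR_C=\NN$.

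The step I expect to be the main obstacle is this identification $\gamma'=C$, equivalently $\OO(\lambda)y=C^*(k_\lambda\otimes y)$ for all $\lambda$ and $y$. The idea is that both sides are fixed points of the \emph{same} strict contraction on $\NN$. On one side, the relation $\OO(\lambda)=\bm c^*+\bm a^*Z_\XX(\lambda)^*\OO(\lambda)$ derived in the proof of Lemma~\ref{le:formula for psi/gamma}, together with $\bm c^*y=C^*(\bm 1\otimes y)$ and $Z_\XX(\lambda)^*x=\beta(\lambda)\otimes x$, gives $\OO(\lambda)y=C^*(\bm 1\otimes y)+\tbB^*(\beta(\lambda)\otimes\OO(\lambda)y)$. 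On the other side, writing~\eqref{eq:gleason2} as the operator identity $C=(\bmB\otimes I_\GG)(I_\BB\otimes C)\tbB+(\pi\otimes I_\GG)C$, taking adjoints, evaluating at $k_\lambda\otimes y$, and using~\eqref{eq:B*(k_lambda)} and $\pi k_\lambda=\bm 1$ (a consequence of $\delta\equiv1$ and $\beta(\lambda_0)=0$, which force $\langle k_\lambda,\bm 1\rangle=1$), gives $C^*(k_\lambda\otimes y)=C^*(\bm 1\otimes y)+\tbB^*(\beta(\lambda)\otimes C^*(k_\lambda\otimes y))$. Since $\tbB$ is a contraction and $\|\beta(\lambda)\|<1$, the affine map $v\mapsto C^*(\bm 1\otimes y)+\tbB^*(\beta(\lambda)\otimes v)$ on $\NN$ is a strict contraction, so its fixed point is unique and $\OO(\lambda)y=C^*(k_\lambda\otimes y)$ follows on the total set $\{k_\lambda\otimes y\}$, giving $\gamma'^*=C^*$. (The same estimate yields well-definedness of $\OO(\lambda)$; the rest — the tensor-order bookkeeping and the range-space identifications — is routine.)

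Finally, suppose $\NN$ is isometrically included in $\HK\otimes\GG$. Subtracting~\eqref{eq:gleason2} from~\eqref{eq:gleason for the whole space} gives $(\bmB\otimes I_\GG)\tbB\xi=(\bmB\bmB^*\otimes I_\GG)\xi$, so $\tbB\xi-(\bmB^*\otimes I_\GG)\xi\in\ker(\bmB\otimes I_\GG)$. Isometric inclusion of $\NN$, and hence of $\BB\otimes\NN$, lets one read the two norms in~\eqref{eq:inequality for difference quotients2} as norms in $\HK\otimes\GG$ and $\BB\otimes\HK\otimes\GG$, and then~\eqref{eq:diff quotients for the whole space} turns~\eqref{eq:inequality for difference quotients2} into $\|\tbB\xi\|\le\|(\bmB^*\otimes I_\GG)\xi\|$. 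Since $(\bmB^*\otimes I_\GG)\xi$ lies in $\ker(\bmB\otimes I_\GG)^\perp$, the Pythagorean identity $\|\tbB\xi\|^2=\|(\bmB^*\otimes I_\GG)\xi\|^2+\|\tbB\xi-(\bmB^*\otimes I_\GG)\xi\|^2$ forces $\tbB\xi=(\bmB^*\otimes I_\GG)\xi$, which is the claim (and shows in passing that $(\bmB^*\otimes I_\GG)\NN\subset\BB\otimes\NN$).
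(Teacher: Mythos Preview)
Your proof is correct and follows essentially the same route as the paper. For $(i)\Rightarrow(ii)$ and $(ii)\Rightarrow(i)$ you do exactly what the paper does: build the coisometric colligation with $\XX=\NN$, $\bm a=\tbB$, $\bm c$ determined by $(\pi\otimes I_\GG)C$, and then identify $C$ with the observability map $\gamma'$; your fixed-point phrasing of the identification is equivalent to the paper's direct inversion of $I_\NN-\tbB^*Z_\NN(\lambda)^*$ (both rest on $\|\tbB\|\le1$ and $\|\beta(\lambda)\|<1$). For the isometric-inclusion clause your argument differs slightly: the paper takes the scalar product of~\eqref{eq:gleason2} with $\xi$ and combines with~\eqref{eq:diff quotients for the whole space} and~\eqref{eq:inequality for difference quotients2} to force $\|(\bmB^*\otimes I_\GG)\xi-\tbB\xi\|^2\le0$, whereas you observe that $\tbB\xi-(\bmB^*\otimes I_\GG)\xi\in\ker(\bmB\otimes I_\GG)$ is orthogonal to $(\bmB^*\otimes I_\GG)\xi$ and apply Pythagoras together with the same norm inequality. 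Both arguments are equally short and elementary.
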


\begin{proof}
The implication (i)$\Rightarrow$(ii) follows from~\eqref{eq:gleason} and\eqref{eq:inequality for difference quotients} by taking $\tbB$ to be defined by~\eqref{eq:definition of tilde B}.

To prove (ii)$\Rightarrow$(i), let~$C:\NN\to \HK\otimes\GG$ be the inclusion (which is a contraction).
Note first that~\eqref{eq:inequality for difference quotients2} implies that the column matrix 
$\left(\begin{smallmatrix}
\tbB\\ (\tilde\pi\otimes I_\GG)C
\end{smallmatrix}\right)$
defines a contraction from $\NN$ to $(\BB\otimes \NN)\oplus \GG$, where we have denoted by $\tilde\pi$ the projection $\pi$ followed by the identification of its one dimensional range $\bbC\bm 1$ with the scalar field $\bbC$. This contraction may then be extended to a coisometry 
\begin{equation}\label{eq:definition of U for complementary spaces}
 U=\begin{pmatrix} \tbB & \bm b\\ (\tilde\pi\otimes I_\GG) & \bm d
 \end{pmatrix}: \NN\oplus \GG'\to (\BB\otimes \NN)\oplus \GG
\end{equation}
for some Hilbert space $\GG'$. Define then 
\[
 G(\lambda)= \bm d +(\tilde\pi\otimes I_\GG)C (I-Z_\NN(\lambda)\tbB )^{-1}Z_\NN(\lambda)\bm b.
\]

We claim that $\RR_C=\MM_G^\sharp$. Indeed, it follows from Corollary~\ref{co:M_Gsharp} (and the notations preceding it) that it is enough to show that $C=\gamma$, where $\gamma:\NN\to \HK\otimes\GG$ is defined by  
\[
 \gamma^*(k_\lambda\otimes y)= (I_\NN-\tbB^* Z_\NN(\lambda)^*)^{-1}C^*(\bm 1\otimes y).
\]

But  relation~\eqref{eq:gleason2} says that
\[
 C^*-\tbB^* (I_\BB\otimes C^*)(\bmB^*\otimes I_\GG)=C^*(\pi\otimes I_\GG),
\]
which, using
\[
\begin{split}
 (I_\BB\otimes C^*)(\bmB^*\otimes I_\GG)(k_\lambda\otimes y)&=
 (I_\BB\otimes C^*) (\beta(\lambda)\otimes k_\lambda\otimes y)=\beta(\lambda)\otimes C^*(k_\lambda\otimes y)\\
&= Z_\NN(\lambda)^*C^*(k_\lambda\otimes y),
\end{split}
\]
becomes
\[
 (I_\NN-\tbB^* Z_\NN(\lambda)^*) C^*(k_\lambda\otimes y)=C^*(\bm 1\otimes y),
\]
or
\[
 C^*(k_\lambda\otimes y)=(I_\NN-\tbB^* Z_\NN(\lambda)^*)^{-1}C^*(\bm 1\otimes y)=\gamma^*(k_\lambda\otimes y),
\]
which finishes the proof of the first part of the theorem.

Finally, suppose that $\NN$ is isometrically included into $\HK\otimes\GG$, so we may identify it to the image of $C$. The Hilbert space structure is then that of $\HK\otimes\GG$, and we have, by taking the scalar product with $\xi$ in~\eqref{eq:gleason2},
\begin{equation}\label{eq:gleason-scalar-product}
\|\xi\|^2=\<\tbB \xi, (\bmB^*\otimes I_\GG)\xi\>+\|(\pi\otimes I_\GG)\xi\|^2_{\HK\otimes\GG}.
\end{equation}
From~\eqref{eq:diff quotients for the whole space},~\eqref{eq:inequality for difference quotients2} and~\eqref{eq:gleason-scalar-product} it follows that
\[
\|(\bmB^*\otimes I_\GG)\xi\|_{\HK\otimes\GG}^2=\<\tbB \xi, (\bmB^*\otimes I_\GG)\xi\>\ge \|\tbB \xi\|^2_{\HK\otimes\GG}.
\]
Since~\eqref{eq:gleason-scalar-product} implies in particular that $\<\tbB \xi, (\bmB^*\otimes I_\GG)\xi\>$ is real, we have
\[
\|(\bmB^*\otimes I_\GG)\xi-\tbB \xi\|^2=\|(\bmB^*\otimes I_\GG)\xi\|^2-2\<\tbB \xi, (\bmB^*\otimes I_\GG)\xi\>+\|\tbB \xi\|^2\le0
\]
Thus $(\bmB^*\otimes I_\GG)\xi=\tbB \xi$ for any $\xi\in\NN$.
\end{proof}

Note that when $\NN$ is isometrically included in $\HK\otimes\GG$, we have directly proved that $(\bmB^*\otimes I_\GG)\NN\subset \BB\otimes\NN$. This also follows from the fact that in this case complementarity of subspaces becomes usual orthogonality, and so $\NN=\MM_G^\perp$; but we know that $(\bmB\otimes I_\GG)(\BB\otimes\MM_G)\subset \MM_G$ (see Theorem~\ref{th:invariant spaces} (ii)).

\subsection{The general situation} Let us consider now the general case of a Pick kernel (not necessarily normalized at $\lambda_0$). The normalization can be achieved by considering instead of $K(\mu, \lambda)$ the kernel $K'(\mu, \lambda)=\frac{K(\mu, \lambda)}{\delta(\mu)\delta(\bar\lambda)}$. The multipliers are the same for the two spaces $\HK$ and $\HHH(K')$.
The map $\Omega$ defined by $\Omega(f)=f/\delta$ is a unitary from $\HK$ to $\HHH(K')$. It commutes with multipliers and so maps the range of $G$ in $\HK$ onto the range of $G$ in $\HHH(K')$. 
Finally, from Lemma~\ref{le:identification of the complementary subspace by its kernel} it follows that it maps similarly the complements. The function $\beta$ and the operator $\bmB$ are the same for $\HHH(K)$ and $\HHH(K')$. One can then obtain the following analogue of Theorem~\ref{th:characterization of complementary subspaces}.

\begin{theorem}\label{th:complementary subspaces-general case}
 Suppose $K$ is a Pick kernel, $\GG$ is a Hilbert space, and $\NN$ is a contractively included subspace of $\HK\otimes\GG$. 
 The following are equivalent:

(i) There exists a contractively contained subspace $\MM_G$ such that $\NN=\MM_G^\sharp$.

(ii) There exists a contraction $\tbB: \Omega(\NN)\to \BB\otimes\Omega(\NN)$, such that for any $\xi\in\NN$ are satisfied the relations
\begin{align*}
\Omega \xi&=(\bmB\otimes I_\GG)\tbB\Omega \xi+ (\pi'\otimes I_\GG)\Omega \xi;\\
 \|\tbB \Omega\xi\|^2_{\BB\otimes\Omega\NN}&\le \|\Omega\xi\|^2_{\NN}
-\|(\pi'\otimes I_\GG)\Omega\xi\|^2_{\HHH(K')\otimes\GG},
\end{align*}
where $\pi'$ denotes the orthogonal projection onto the constant functions in $\HHH(K')$.
\end{theorem}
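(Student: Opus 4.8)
The plan is to reduce immediately to Theorem~\ref{th:characterization of complementary subspaces} by transporting everything through the normalization unitary $\Omega$. Recall from the discussion preceding the statement that $K'(\mu,\lambda)=K(\mu,\lambda)/(\delta(\mu)\overline{\delta(\lambda)})$ is again a Pick kernel, this time normalized at $\lambda_0$ (one checks at once that $k'_{\lambda_0}$ is the constant function $\bm 1$, so $\delta'\equiv\bm 1$ and $K'$ has the form~\eqref{eq:pick} with the same $\beta$), that $\Omega\colon f\mapsto f/\delta$ is a unitary from $\HK$ onto $\HHH(K')$, and that $\beta$, $\bmB$ and all contractive multipliers are common to the two spaces. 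I will write $\Omega$ also for $\Omega\otimes I_\GG\colon \HK\otimes\GG\to\HHH(K')\otimes\GG$ and put $\Omega\NN:=\Omega(\NN)$, equipped with the norm transported from $\NN$; since $\Omega\otimes I_\GG$ is unitary on the ambient space, $\Omega\NN\Subset\HHH(K')\otimes\GG$ and $\|\Omega\xi\|_{\Omega\NN}=\|\xi\|_\NN$ for $\xi\in\NN$ --- this being the meaning of the notation $\|\Omega\xi\|^2_\NN$ in the statement.

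The first step is to record the relevant intertwining relations, each verified by evaluating on reproducing kernels (note $\Omega k_\lambda=\overline{\delta(\lambda)}\,k'_\lambda$): for a contractive multiplier $G$ one has $(\Omega\otimes I_\GG)\mmm^K_G=\mmm^{K'}_G(\Omega\otimes I_{\GG'})$, similarly $\Omega\bmB=\bmB(I_\BB\otimes\Omega)$, and $\Omega\pi=\pi'\Omega$ (since $\Omega$ carries the unit vector $\delta$ spanning the range of $\pi$ to the unit vector $\bm 1$ spanning the range of $\pi'$). From the first relation, $\Omega$ maps the space $\MM_G$ of $\HK\otimes\GG$ isometrically onto the space $\MM_G$ of $\HHH(K')\otimes\GG$ (each is the range of the corresponding $\mmm_G$; equality of range norms by Lemma~\ref{le:same range space}); and since $\Omega\otimes I_\GG$ conjugates $\mmm^K_G(\mmm^K_G)^*$ to $\mmm^{K'}_G(\mmm^{K'}_G)^*$, Lemma~\ref{le:complementary space as image of defect} together with Lemma~\ref{le:same range space} shows that $\Omega$ likewise maps $\MM_G^\sharp$ (taken in $\HK\otimes\GG$) isometrically onto $\MM_G^\sharp$ (taken in $\HHH(K')\otimes\GG$). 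In particular, for any fixed contractive multiplier $G$, the identity $\NN=\MM_G^\sharp$ holds in $\HK\otimes\GG$ if and only if $\Omega\NN=\MM_G^\sharp$ holds in $\HHH(K')\otimes\GG$.

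With this in hand, the conclusion is a one-line translation. Since $K'$ is a Pick kernel normalized at $\lambda_0$, Theorem~\ref{th:characterization of complementary subspaces} applies to $(K',\GG,\Omega\NN)$ and asserts that $\Omega\NN=\MM_G^\sharp$ for some contractive multiplier $G$ if and only if there is a contraction $\tbB\colon\Omega\NN\to\BB\otimes\Omega\NN$ with
\[
 \eta=(\bmB\otimes I_\GG)\tbB\eta+(\pi'\otimes I_\GG)\eta,\qquad \|\tbB\eta\|^2_{\BB\otimes\Omega\NN}\le\|\eta\|^2_{\Omega\NN}-\|(\pi'\otimes I_\GG)\eta\|^2_{\HHH(K')\otimes\GG}
\]
for all $\eta\in\Omega\NN$. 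Setting $\eta=\Omega\xi$ with $\xi\in\NN$, the left-hand condition becomes (i) by the previous paragraph, and the two relations become precisely those displayed in (ii) (using $\|\Omega\xi\|_{\Omega\NN}=\|\xi\|_\NN$). This establishes the equivalence.

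I do not foresee a genuine obstacle; the argument is pure bookkeeping. The only points needing care are the verification that $K'$ is Pick and normalized at $\lambda_0$, and that the intertwining identities are written with the tensor factors $I_\GG$ and $I_{\GG'}$ in the correct slots --- both of which follow instantly from evaluation on reproducing kernels.
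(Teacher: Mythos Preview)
Your proposal is correct and follows the same route as the paper: the paper does not give a formal proof but merely sketches, in the paragraph preceding the theorem, that one transports the problem via the unitary $\Omega$ to the normalized kernel $K'$ and then invokes Theorem~\ref{th:characterization of complementary subspaces}. Your write-up supplies the bookkeeping the paper leaves implicit (the intertwining relations and the verification that $\Omega$ carries $\MM_G$ and $\MM_G^\sharp$ to their counterparts in $\HHH(K')\otimes\GG$); the only cosmetic difference is that the paper cites Lemma~\ref{le:identification of the complementary subspace by its kernel} for the transport of complements, whereas you go through Lemmas~\ref{le:same range space} and~\ref{le:complementary space as image of defect}.
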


\section{Some examples}\label{se:final remarks}

Our main results, the characterization of ranges of multipliers and of their complementaries in a Pick space (Theorems~\ref{th:invariant spaces} and~\ref{th:characterization of complementary subspaces}) are exact analogues of the corresponding results in~\cite{ball-bolotnikov-fang2007} for the Drury--Arveson space $\DD(\bbB)$. Although this is not surprising by Theorem~\ref{th:embedding}, one should note that the results for a general Pick space do not follow from those for $\DD(\bbB)$; also, the multidimensional system theory used in~\cite{ball-bolotnikov-fang2007} cannot be transposed to the general case.
Moreover,  a series of facts that are true for the Drury--Arveson space  do not extend to a general Pick space. A few examples are given in this section.

First, the following uniqueness result concerning the representation of the function $\gamma$ is proved in~\cite{ball-bolotnikov-fang2007} for the space $\DD(\bbB)$. Note that in this case $\delta\equiv1$.

\begin{proposition}\label{pr:uniqueness a c}
Suppose that we have Hilbert spaces $\XX_1, \XX_2$ and operators $\gamma_i:\XX_i\to \HK\otimes \GG$ ($i=1,2$), such that $\ker \gamma_i=\{0\}$ and $\gamma_1\gamma_1^*=\gamma_2\gamma_2^*$. If, for any $x_i\in\XX_i$ ($i=1,2$) we have
\[
\gamma_i(x_i)=\bm c_i (I-Z_{\XX_i}(\lambda)\bm a_i)^{-1} x_i,
\]
then there exists a unitary operator $U:\XX_1\to\XX_2$ such that $\bm c_2=U\bm c_1$ and $(I_\BB\otimes U)\bm a_1=\bm a_2 U$.
\end{proposition}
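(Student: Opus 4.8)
The plan is to construct $U$ directly from the common range space and then extract the two intertwining relations from the resolvent representation of the $\gamma_i$.

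Since $\gamma_1\gamma_1^{*}=\gamma_2\gamma_2^{*}$, Lemma~\ref{le:same range space} gives $\RR_{\gamma_1}=\RR_{\gamma_2}$, with equality of norms; call this space $\RR$. As $\ker\gamma_i=\{0\}$, each $\gamma_i$ maps $\XX_i$ unitarily onto $\RR$ (see the discussion after~\eqref{eq:definition of the range norm}), so $U:=\gamma_2^{-1}\gamma_1\colon\XX_1\to\XX_2$ is unitary and $\gamma_2U=\gamma_1$. Since $\delta\equiv1$, a direct computation of the adjoint from $\gamma_i(x)(\lambda)=\bm c_i(I-Z_{\XX_i}(\lambda)\bm a_i)^{-1}x$ gives $\gamma_i^{*}(k_\lambda\otimes y)=\OO_i(\lambda)y$ with $\OO_i(\lambda)=(I-\bm a_i^{*}Z_{\XX_i}(\lambda)^{*})^{-1}\bm c_i^{*}$, in agreement with Lemma~\ref{le:formula for psi/gamma} and~\eqref{eq:formula for gamma}. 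Taking adjoints in $\gamma_2U=\gamma_1$ therefore yields $U\OO_1(\lambda)=\OO_2(\lambda)$ for every $\lambda\in\Lambda$.

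Because $\delta\equiv1$ we have $\beta(\lambda_0)=0$, hence $Z_{\XX_i}(\lambda_0)=0$ and $\OO_i(\lambda_0)=\bm c_i^{*}$; evaluating the identity above at $\lambda_0$ gives $U\bm c_1^{*}=\bm c_2^{*}$, the asserted relation between $\bm c_1$ and $\bm c_2$. For the second relation I would use the resolvent identity $\OO_i(\lambda)y=\bm c_i^{*}y+\bm a_i^{*}\bigl(\beta(\lambda)\otimes\OO_i(\lambda)y\bigr)$, obtained from $\OO_i=\bm c_i^{*}+\bm a_i^{*}Z_{\XX_i}(\lambda)^{*}\OO_i$ via $Z_{\XX_i}(\lambda)^{*}w=\beta(\lambda)\otimes w$. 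Applying $U$ to the $i=1$ version, substituting $U\OO_1(\lambda)=\OO_2(\lambda)$ and $U\bm c_1^{*}=\bm c_2^{*}$, and subtracting the $i=2$ version, one obtains
\[
U\bm a_1^{*}\bigl(\beta(\lambda)\otimes\OO_1(\lambda)y\bigr)=\bm a_2^{*}(I_\BB\otimes U)\bigl(\beta(\lambda)\otimes\OO_1(\lambda)y\bigr)
\]
for all $\lambda\in\Lambda$ and $y\in\GG$. Thus $U\bm a_1^{*}$ and $\bm a_2^{*}(I_\BB\otimes U)$ agree on $\TT_1:=\overline{\Span}\{\beta(\lambda)\otimes\OO_1(\lambda)y:\lambda\in\Lambda,\ y\in\GG\}$, and if $\TT_1=\BB\otimes\XX_1$ then $U\bm a_1^{*}=\bm a_2^{*}(I_\BB\otimes U)$ on all of $\BB\otimes\XX_1$, whose adjoint is precisely $(I_\BB\otimes U)\bm a_1=\bm a_2U$.

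The step I expect to be the main obstacle is the density $\TT_1=\BB\otimes\XX_1$. Injectivity of $\gamma_1$ makes $\{\OO_1(\lambda)y\}$ total in $\XX_1$ and minimality of $\beta$ makes $\{\beta(\lambda)\}$ total in $\BB$, but since the two families are tied to the same parameter $\lambda$ a separate argument is needed. I would push the question into the Drury--Arveson space via the embedding $\epsilon_K\colon\HK\hookrightarrow\DD(\bbB)$ of Theorem~\ref{th:embedding}, where $\beta$ becomes a conjugate of the identity map of the ball and the dilation $\lambda\mapsto t\lambda$ ($0\le t<1$) is available; expanding $\beta(t\lambda)\otimes\OO_1(t\lambda)y$ as a power series in $t$ isolates, one degree at a time, the homogeneous components $\beta(\lambda)\otimes\bigl(\bm a_1^{*}(\beta(\lambda)\otimes\,\cdot\,)\bigr)^{n}\bm c_1^{*}y$, and an induction on $n$ — matching monomials (a ball is a uniqueness set for $\DD(\bbB)$) and feeding in the resolvent identity together with the observability of $(\bm a_1,\bm c_1)$ — should recover all of $\BB\otimes\XX_1$. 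The delicate point is to be sure that the observability actually available, namely $\ker\gamma_1=\{0\}$ together with the structure of the realizations coming from Section~\ref{se:delta=1}, suffices to run this induction; this is where I would expect to have to work.
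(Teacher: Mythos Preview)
The paper does not give its own proof of this proposition: it is quoted from \cite{ball-bolotnikov-fang2007} (``the following uniqueness result \dots is proved in \cite{ball-bolotnikov-fang2007} for the space $\DD(\bbB)$''), and immediately afterwards the paper turns to Example~\ref{ex:not uniqueness}. So there is nothing in the paper to compare your argument against; one can only check whether your argument stands on its own.

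Your construction of $U$ and the derivation of $U\OO_1(\lambda)=\OO_2(\lambda)$, hence $U\bm c_1^{*}=\bm c_2^{*}$, are correct, and you have correctly isolated the only real issue: the density $\TT_1=\BB\otimes\XX_1$. But this density genuinely \emph{fails} for observable pairs $(\bm a,\bm c)$ without further structure, even in $\DD(\bbB)$, and with it the conclusion of the proposition as literally stated. Take $\BB=\bbC^2$, $\XX=\bbC^3$, $\GG=\bbC$, $\bm c^{*}1=f_1$, and let $\bm a_1^{*}$ send $e_1\otimes f_1\mapsto f_2$, $e_2\otimes f_1\mapsto f_3$, everything else to $0$. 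Then $\gamma_1(f_1)=1$, $\gamma_1(f_2)=\lambda_1$, $\gamma_1(f_3)=\lambda_2$, so $\gamma_1$ is injective. Now perturb: let $\bm a_2^{*}$ agree with $\bm a_1^{*}$ except that $\bm a_2^{*}(e_2\otimes f_2)=\epsilon f_1$ and $\bm a_2^{*}(e_1\otimes f_3)=-\epsilon f_1$. A direct computation (the perturbation is nilpotent along the orbit of $f_1$) gives $\gamma_2=\gamma_1$, yet $\bm a_2\neq\bm a_1$, so no unitary $U$ can intertwine them. In this example $\TT_1$ is the hyperplane $\{v:\ \langle v,e_1\otimes f_3-e_2\otimes f_2\rangle=0\}\subsetneq\BB\otimes\XX_1$, exactly the direction in which $\bm a_1$ and $\bm a_2$ differ. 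Your proposed homogeneity/polarization scheme cannot repair this: polarization of $\beta(\lambda)\otimes(\bm a^{*}Z(\lambda)^{*})^{n}\bm c^{*}y$ only produces symmetric combinations and misses the antisymmetric direction above.

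What is missing is the colligation structure: in \cite{ball-bolotnikov-fang2007} (and implicitly in Section~\ref{se:delta=1} here) the pair $(\bm a,\bm c)$ is the first column of a coisometry, so in particular $\bm c\bm a^{*}+\bm d\bm b^{*}=0$ and $\bm c\bm c^{*}+\bm d\bm d^{*}=I_\GG$. In the example above $\bm c_2\bm c_2^{*}=I_\GG$ forces $\bm d=0$, while $\bm c_2\bm a_2^{*}\neq0$, so the perturbed pair cannot be completed to a coisometry. Your sketch does not invoke this relation at any point, and without it the argument cannot close; the step you flagged as ``where I would expect to have to work'' is precisely where the coisometric constraint must enter, not merely observability of $(\bm a_1,\bm c_1)$.
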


This is no more true for a general Pick space, as shown by the  following very simple example.

\begin{example}\label{ex:not uniqueness}
Suppose $\Lambda=\{0, 1/2\}$, and the reproducing kernel $K$ is given by the formula
\[
K(\mu,\lambda)=\frac{1}{1-\frac{1}{2}(\mu\bar\lambda+\mu^2\bar\lambda^2)}
\]
This is a Pick kernel corresponding to the function $\beta:\Lambda\to\bbC^2$, $\beta(\lambda)=(\frac{ 
1} { \sqrt 2}\bar\lambda, \frac{ 1} { \sqrt 2}\bar\lambda^2)$.

Define $\XX_1=\XX_2=\GG=\bbC$, $\bm c_1=\bm c_2$ the identity on $\bbC$, and $\bm a_1,\bm a_2:\bbC\to\bbC^2$ given by
\[
\bm a_1(z)=(z/8,0),\quad \bm a_2(z)=(0, z/4).
\]
We have $Z_{\XX_i}(\lambda)(z,w)=\frac{ 1} { \sqrt 2}(\lambda z+\lambda^2 w)$ for $i=1,2$; thus $\gamma_i$ are elements of $\HK$  given by
\[
\gamma_1(\lambda)=\frac{ 1} { \sqrt 2}(1-\lambda/8)^{-1}, \quad \gamma_2(\lambda)=\frac{ 1} { \sqrt 2}(1-\lambda^2/4)^{-1}.
\]
Then $\gamma_1=\gamma_2$ on $\Lambda$, and therefore the hypothesis in Proposition~\ref{pr:uniqueness a c} is satisfied. However, it is obvious that there is no complex number $\kappa$ of modulus 1 such that $\kappa\bm a_1=\bm a_2\kappa$.
\end{example}

Another difference with the case of the Drury--Arveson space appears if we consider the possibility of taking $\tbB=\bmB^*\otimes I_\GG$. This is equivalent to the inclusion~\eqref{eq:inclusion with B}, since then~\eqref{eq:gleason for the whole space} and~\eqref{eq:diff quotients for the whole space} show that $\bmB^*\otimes I_\GG$ satisfies the conditions (ii) required from $\tbB$ in Theorem~\ref{th:characterization of complementary subspaces}. 

Suppose then that~\eqref{eq:inclusion with B} is satisfied. Then $\NN=\MM_G^\sharp$, whence the coisometry in~\eqref{eq:definition of U for complementary spaces} that defines $G$ has the form
\[
U=\begin{pmatrix}
\bmB^*\otimes I_\GG|\NN & \bm b\\
\tilde\pi \otimes I_\GG & \bm d
\end{pmatrix}.
\]
We have thus $\bm a= \bmB^*\otimes I_\GG|\NN$. 

There is more that can be said in this case about the operator $\bm a$.  We need some supplementary notation. Suppose $\XX$ is some Hilbert space, and define, for  each $\xi\in \BB$, the operator $L^\XX_\xi:\XX\to \BB\otimes\XX$ by $L^\XX_\xi(x)=\xi\otimes x$. It is easy to see that $(L^\XX_\xi)^*(\eta\otimes x)= \< \eta,\xi \>x$. If $\XX'$ is another Hilbert space and $A:\XX\to\XX'$, then
\[
 A(L^\XX_\xi)^*(\eta\otimes x)= \< \eta,\xi \>Ax= (L^{\XX'}_\xi)^*(I_\BB\otimes A)(\eta\otimes x)
\]
and thus
\begin{equation}\label{eq:commutation of Lxi}
  A(L^\XX_\xi)^*=(L^{\XX'}_\xi)^*(I_\BB\otimes A).
\end{equation}

Define then $\bm a_\xi:\XX\to\XX$ by $\bm a_\xi= (L^\XX_\xi)^*\bm a$. The operators $\bm a_\xi$ can be considered as ``coordinates'' of the operator $\bm a$. 
We may then prove the following commutativity result.

\begin{lemma}\label{le:if N is invariant, then a_xi commute}
 With the above notations, if we take $\bm a= \bmB^*\otimes I_\GG|\NN$, then $ \bm a_\xi \bm a_\eta=\bm a_\eta \bm a_\xi$ for all $\xi, \eta\in\BB$.
\end{lemma}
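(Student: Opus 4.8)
The plan is to reduce everything to a single observation: iterating $\bmB^*$ produces the symbol $\beta(\lambda)\otimes\beta(\lambda)$, which is symmetric under exchanging the two copies of $\BB$. (In coordinates this is just the commutativity $M_{b_i}^*M_{b_j}^*=M_{b_j}^*M_{b_i}^*$ of the scalar multipliers $b_i$ of Remark~\ref{re:relation to other}; I would keep the argument coordinate-free.) Concretely, I would first record, using~\eqref{eq:B*(k_lambda)}, that
\[
(I_\BB\otimes\bmB^*)\bmB^*(k_\lambda)=(I_\BB\otimes\bmB^*)\bigl(\beta(\lambda)\otimes k_\lambda\bigr)=\beta(\lambda)\otimes\beta(\lambda)\otimes k_\lambda .
\]
Since the $k_\lambda$ span a dense subspace of $\HK$, the bounded operator $S:=(I_\BB\otimes\bmB^*\otimes I_\GG)(\bmB^*\otimes I_\GG)$ on $\HK\otimes\GG$ is fixed by the flip $\Sigma$ of its first two $\BB$-legs: $(\Sigma\otimes I_{\HK\otimes\GG})S=S$.

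Next I would transport this symmetry to $\bm a$. Since by hypothesis $\bm a$ is simply the restriction of $\bmB^*\otimes I_\GG$ to $\NN$ and its range already lies in $\BB\otimes\NN$, a direct check on simple tensors (extended by linearity and continuity, all maps being contractions) gives the vector-space identity $(I_\BB\otimes\bm a)\bm a=S|_\NN$, whose range lies in $\BB\otimes\BB\otimes\NN$. Combining with the previous step, $(\Sigma\otimes I_{\BB\otimes\NN})(I_\BB\otimes\bm a)\bm a=(I_\BB\otimes\bm a)\bm a$.

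Finally I would unwind $\bm a_\xi\bm a_\eta=(L^{\NN}_\xi)^*\bm a(L^{\NN}_\eta)^*\bm a$. Applying~\eqref{eq:commutation of Lxi} with $A=\bm a$ to slide $\bm a$ past $(L^{\NN}_\eta)^*$ yields
\[
\bm a_\xi\bm a_\eta=(L^{\NN}_\xi)^*(L^{\BB\otimes\NN}_\eta)^*(I_\BB\otimes\bm a)\bm a .
\]
A one-line computation on simple tensors shows $(L^{\NN}_\xi)^*(L^{\BB\otimes\NN}_\eta)^*=(L^{\NN}_\eta)^*(L^{\BB\otimes\NN}_\xi)^*(\Sigma\otimes I_\NN)$; inserting this and using the flip-invariance of $(I_\BB\otimes\bm a)\bm a$ from the previous paragraph turns the right-hand side into $(L^{\NN}_\eta)^*(L^{\BB\otimes\NN}_\xi)^*(I_\BB\otimes\bm a)\bm a=\bm a_\eta\bm a_\xi$, which is the claim.

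I expect the only delicate point to be the tensor-leg bookkeeping — tracking which two copies of $\BB$ the flip $\Sigma$ permutes at each stage, and making sure the identity $(I_\BB\otimes\bm a)\bm a=S|_\NN$ is legitimate at the level of vectors even though $\NN$ carries its own (a priori non-ambient) Hilbert-space norm. The conceptual content is light: commutativity of the coordinate multipliers forces the iterated symbol to be symmetric, and any restriction of $\bmB^*\otimes I_\GG$ whose range is again invariant inherits that symmetry.
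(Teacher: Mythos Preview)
Your argument is correct and rests on exactly the same two ingredients as the paper's: the action of $\bmB^*$ on reproducing kernels via~\eqref{eq:B*(k_lambda)}, and the fact that $\bm a$ is literally the restriction of $\bmB^*\otimes I_\GG$ to $\NN$, so that everything can be checked in the ambient space and then pulled back along the injective inclusion $\iota_\NN$. The only difference is in packaging. The paper slices first: it defines $\ell_\xi=L^{\HK\otimes\GG}_\xi$, observes that each $\ell_\xi^*(\bmB^*\otimes I_\GG)$ has the total family $\{k_\lambda\otimes y\}$ as eigenvectors (with eigenvalue $\langle\beta(\lambda),\xi\rangle$) and hence all these operators commute, and then uses the intertwining $\iota_\NN\bm a_\xi=\ell_\xi^*(\bmB^*\otimes I_\GG)\iota_\NN$ (obtained from~\eqref{eq:commutation of Lxi}) to descend to $\NN$. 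You instead iterate first, encode the commutativity as flip-invariance of $(I_\BB\otimes\bmB^*\otimes I_\GG)(\bmB^*\otimes I_\GG)$, descend that symmetry to $(I_\BB\otimes\bm a)\bm a$, and slice with the $L_\xi^*$'s at the very end. Your flip formulation is the coordinate-free way of saying ``the scalar multipliers $M_{b_i}$ commute''; the paper's common-eigenvector argument is marginally shorter but identical in content. One small bookkeeping slip: in the displayed identity $(\Sigma\otimes I_{\BB\otimes\NN})(I_\BB\otimes\bm a)\bm a=(I_\BB\otimes\bm a)\bm a$ the subscript should be $I_\NN$, since the target is $\BB\otimes\BB\otimes\NN$.
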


\begin{proof}
 For $\xi\in\BB$, denote $\ell_\xi:= L^{\HK\otimes\GG}$. We have
\[
 \ell_\xi^* (\bmB^*\otimes I_\GG)(k_\lambda\otimes y)
=\ell_\xi^* (\beta(\lambda)\otimes k_\lambda\otimes y)=
\<\beta(\lambda),\xi\> (k_\lambda\otimes y).
\]
So, for any $\xi\in\BB$ the operator $\ell_\xi^* (\bmB^*\otimes I_\GG)$ has the total set $\{ k_\lambda\otimes y:\lambda\in\Lambda, y\in\GG\}$ as eigenvectors. It follows that they all commute (for different values of $\xi\in\BB$).

Let us then consider the diagram 
\[
 \begin{CD}
\NN @> \bmB^*\otimes I_\GG|\NN>> \BB\otimes\NN @> (L^\NN_\xi)^* >> \NN\\
@V \iota_\NN VV @V I_\BB\otimes\iota_\NN VV @V \iota_\NN VV\\
\HK\otimes\GG  @> \bmB^*\otimes I_\GG>> \BB\otimes\HK\otimes\GG @> \ell_\xi^* >> \HK\otimes\GG
 \end{CD}
\]
The first square is commutative by the remark above, while the second is commutative by applying~\eqref{eq:commutation of Lxi} to the case $\XX=\NN$, $\XX'=\HK\otimes\GG $, $A=\iota_\NN$. It follows that
$\iota_\NN \bm a_\xi= \ell_\xi^* (\bmB^*\otimes I_\GG) \iota_\NN$, whence we deduce that all the operators $\bm a_\xi$ (for $\xi\in\BB$) commute.
\end{proof}

In~\cite{ball-bolotnikov-fang2007} one obtains a kind of converse to Lemma~\ref{le:if N is invariant, then a_xi commute} in the case of the Drury--Arveson space. Namely, the next proposition follows from~\cite[Theorem 3.15]{ball-bolotnikov-fang2007}.

\begin{proposition}\label{pr:abelian case in [BBF]}
Suppose $\HK=\DD(\bbB)$. If $\bm c \bm a_\xi \bm a_\eta=\bm c \bm a_\eta \bm a_\xi$ for all $\xi,\eta\in \BB$, then
\begin{equation}\label{eq:formula for tilde B=B*}
(\bmB^*\otimes I_\GG) \gamma= (I_\BB\otimes\gamma)\bm a.
\end{equation}
Consequently, $(\bmB^*\otimes I_\GG)(\NN)\subset \BB\otimes \NN$ and $\tbB=(\bmB^*\otimes I_\GG)|\NN$.
\end{proposition}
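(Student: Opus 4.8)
The plan is to prove Proposition~\ref{pr:abelian case in [BBF]} by reducing it to the characteristic feature of the Drury--Arveson space, namely that the kernel $\DDD$ is symmetric in the coordinates of $\bbB$, so that the transfer-function representation~\eqref{eq:representation of G} of a multiplier can be ``summed'' in the variable $\lambda$ along any power series in the coordinate operators $\bm a_\xi$. Concretely, in the Drury--Arveson case we have $\delta\equiv 1$, $\beta(\eta)=\bar\eta$ (up to the conjugation $J$), and $Z_\XX(\lambda)$ is built from scalar multiplication by the coordinates of $\lambda$; consequently the resolvent $(I-Z_\XX(\lambda)\bm a)^{-1}$ expands as $\sum_{n\ge0}(Z_\XX(\lambda)\bm a)^n$, and each term is a \emph{symmetric} function of the coordinate pieces $Z_\XX(\lambda)_i\bm a_i$ precisely because the monomials $\lambda^\alpha$ that appear are unordered. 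The hypothesis $\bm c\bm a_\xi\bm a_\eta=\bm c\bm a_\eta\bm a_\xi$ for all $\xi,\eta$ is exactly what is needed to make $\bm c(Z_\XX(\lambda)\bm a)^n$ well defined as such a symmetric sum, and hence to identify it with a genuine element of $\BB\otimes\HK$.

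First I would recall from~\eqref{eq:formula for gamma} that, in the normalized case, $\gamma(x)(\lambda)=\bm c(I-Z_\XX(\lambda)\bm a)^{-1}x$, and similarly that $(I_\BB\otimes\gamma)(\bm a x)$ is the $\BB$-valued function $\lambda\mapsto (I_\BB\otimes\bm c)(I-(I_\BB\otimes Z_\XX(\lambda))(I_\BB\otimes\bm a))^{-1}\bm a x$, using that $\gamma$ is applied coordinatewise. Then I would compute $\big((\bmB^*\otimes I_\GG)\gamma\big)x$ using~\eqref{eq:B*(k_lambda)}: on a reproducing kernel this reads $\bmB^*(k_\lambda\otimes\,\cdot\,)=\beta(\lambda)\otimes k_\lambda$, so $\big((\bmB^*\otimes I_\GG)\gamma x\big)(\lambda)=\beta(\lambda)\otimes\gamma(x)(\lambda)=\beta(\lambda)\otimes\bm c(I-Z_\XX(\lambda)\bm a)^{-1}x$. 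Expanding both sides in power series in the coordinates of $\lambda$, I would check that the coefficient of each monomial $\lambda^\alpha$ agrees, which is a purely algebraic identity in the operators $\bm a_\xi$, $\bm c$, $\bmB$: it is here that the commutation relation enters, allowing the reordering of the product $\bm c\,\bm a_{\xi_{i_1}}\cdots\bm a_{\xi_{i_n}}$ into the symmetric form dictated by the Drury--Arveson kernel. Once~\eqref{eq:formula for tilde B=B*} is established, the two consequences are immediate: it shows $(\bmB^*\otimes I_\GG)$ maps $\RR_\gamma=\NN$ into $\BB\otimes\RR_\gamma=\BB\otimes\NN$, and combined with the uniqueness clause of Lemma~\ref{le:contraction and commutation on range spaces}(1) (comparing with the defining relation~\eqref{eq:definition of tilde B} for $\tbB$) it forces $\tbB=(\bmB^*\otimes I_\GG)|\NN$.

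I expect the main obstacle to be the bookkeeping of the power-series expansion and the precise way the hypothesis $\bm c\bm a_\xi\bm a_\eta=\bm c\bm a_\eta\bm a_\xi$ propagates to longer words $\bm c\bm a_{\xi_1}\cdots\bm a_{\xi_n}$. The point is that $\bm c$ ``absorbs'' on the left, so from $\bm c\bm a_\xi\bm a_\eta=\bm c\bm a_\eta\bm a_\xi$ one does \emph{not} get full commutativity of the $\bm a_\xi$ among themselves, only that any two adjacent factors directly preceded (after cancellation) by something in the range pattern can be swapped; one must argue that in the resolvent expansion the relevant products always occur preceded by $\bm c$ after peeling off the outermost operators, so that an induction on word length using repeated adjacent transpositions suffices to symmetrize. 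The cleanest route is probably to work directly with the generating function: set $\Phi(\lambda)=\bm c(I-Z_\XX(\lambda)\bm a)^{-1}$ and verify the functional equation $\beta(\lambda)\otimes\Phi(\lambda)=(I_\BB\otimes\Phi(\lambda))\,\bm a$ as formal power series, reducing everything to the single-step identity $\bm c\,Z_\XX(\lambda)\bm a\,(I-Z_\XX(\lambda)\bm a)^{-1}x$ matching up, which is where the hypothesis is used exactly once per coordinate pair and the Drury--Arveson symmetry of $Z_\XX(\lambda)$ closes the argument.

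\begin{proof}
Since this result is essentially~\cite[Theorem 3.15]{ball-bolotnikov-fang2007}, we only indicate how it fits the present notation, leaving the detailed power series manipulations to the reader. In the Drury--Arveson case $\delta\equiv1$, $\beta(\eta)=J\bar\eta$ for the relevant conjugation $J$, and $Z_\XX(\lambda)=B(\lambda)\otimes I_\XX$ with $B(\lambda)\beta(\eta)=\<\eta,\lambda\>$; writing things out coordinatewise with respect to an orthonormal basis of $\BB$ one sees that $Z_\XX(\lambda)\bm a=\sum_i \overline{\lambda_i}\,\bm a_{e_i}$, so that
\[
\gamma(x)(\lambda)=\bm c(I-Z_\XX(\lambda)\bm a)^{-1}x=\sum_{n\ge0}\bm c\,(Z_\XX(\lambda)\bm a)^n x=\sum_{\alpha}\overline{\lambda}^{\,\alpha}\,\bm c\,\bm a^{\alpha}x,
\]
where for a multi-index $\alpha$ the symbol $\bm a^\alpha$ denotes any product of $|\alpha|$ factors $\bm a_{e_i}$ with $e_i$ appearing $\alpha_i$ times; the hypothesis $\bm c\bm a_\xi\bm a_\eta=\bm c\bm a_\eta\bm a_\xi$ guarantees, by an induction on $|\alpha|$ using adjacent transpositions applied after peeling off the leftmost factor into $\bm c$, that $\bm c\,\bm a^\alpha$ is independent of the order, so the right-hand side is a bona fide element of $\HK\otimes\GG$. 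On the other hand, using~\eqref{eq:B*(k_lambda)} one computes on reproducing kernels
\[
\big((\bmB^*\otimes I_\GG)\gamma x\big)(\lambda)=\beta(\lambda)\otimes\gamma(x)(\lambda)=\beta(\lambda)\otimes\sum_{\alpha}\overline{\lambda}^{\,\alpha}\,\bm c\,\bm a^{\alpha}x,
\]
while, expanding $\gamma$ coordinatewise,
\[
\big((I_\BB\otimes\gamma)\bm a x\big)(\lambda)=(I_\BB\otimes\bm c)(I-(I_\BB\otimes Z_\XX(\lambda))(I_\BB\otimes\bm a))^{-1}\bm a x=\sum_{\alpha}\overline{\lambda}^{\,\alpha}\,(I_\BB\otimes\bm c)(I_\BB\otimes\bm a)^{\alpha}\bm a x.
\]
Matching the coefficient of each monomial $\overline{\lambda}^{\,\alpha}$ reduces~\eqref{eq:formula for tilde B=B*} to the identity obtained by equating, for every word in the coordinate operators, the two sides above; since $\beta(\lambda)$ contributes precisely one extra coordinate and $Z_\XX(\lambda)\bm a=B(\lambda)\otimes\bm a$ acts by the same coordinates, the commutation relation $\bm c\bm a_\xi\bm a_\eta=\bm c\bm a_\eta\bm a_\xi$ is exactly what makes these coefficients agree. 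This proves~\eqref{eq:formula for tilde B=B*}.

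It follows that $(\bmB^*\otimes I_\GG)$ maps $\RR_\gamma=\MM_G^\sharp=\NN$ into $\RR_{I_\BB\otimes\gamma}=\BB\otimes\NN$. Moreover, comparing~\eqref{eq:formula for tilde B=B*} with the defining relation~\eqref{eq:definition of tilde B} of $\tbB$, namely $\tbB\gamma=(I_\BB\otimes\gamma)\bm a$, and invoking the uniqueness part of Lemma~\ref{le:contraction and commutation on range spaces}(1), we conclude $\tbB=(\bmB^*\otimes I_\GG)|\NN$.
\end{proof}
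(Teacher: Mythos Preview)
The paper itself gives no proof of this proposition; it simply records that the statement follows from \cite[Theorem~3.15]{ball-bolotnikov-fang2007}. Your proposal likewise defers to that reference but adds a sketch of the argument, and that sketch contains a genuine error.

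The step
\[
\big((\bmB^*\otimes I_\GG)\gamma x\big)(\lambda)=\beta(\lambda)\otimes\gamma(x)(\lambda)
\]
is wrong. The identity $\bmB^*k_\lambda=\beta(\lambda)\otimes k_\lambda$ only says that reproducing kernels are eigenvectors of $\bmB^*$; it does \emph{not} mean that $\bmB^*$ acts as pointwise multiplication by $\beta(\lambda)$. In Drury--Arveson coordinates the $i$-th component of $\bmB^*$ is $M_{z_i}^*$, the backward shift, and $M_{z_i}^*z^\alpha=\dfrac{\alpha_i}{|\alpha|}\,z^{\alpha-e_i}$; this is not the operation of tensoring with $\beta(\lambda)$. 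Consequently the power series you write for the left-hand side of~\eqref{eq:formula for tilde B=B*} is not the correct one, and your coefficient comparison collapses.

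There is a second gap in the symmetrization. From $\bm c\,\bm a_\xi\bm a_\eta=\bm c\,\bm a_\eta\bm a_\xi$ you cannot deduce by ``peeling off the leftmost factor into $\bm c$'' that $\bm c\,\bm a_{i_1}\cdots\bm a_{i_n}$ depends only on the type of the word: once $\bm a_{i_1}$ is absorbed, the hypothesis says nothing about $(\bm c\bm a_{i_1})\bm a_{i_2}\bm a_{i_3}$ versus $(\bm c\bm a_{i_1})\bm a_{i_3}\bm a_{i_2}$. The induction you describe therefore does not close. The argument in \cite{ball-bolotnikov-fang2007} handles both points differently and should be followed rather than paraphrased in this way; alternatively, if you want a self-contained proof here, you must compute $M_{z_i}^*$ on the actual Taylor expansion of $\gamma x$ and verify the resulting combinatorial identity directly, which is where the weak commutation hypothesis genuinely enters.
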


As shown by the next result, the validity of this proposition does not extend to a general Pick space. We assume again the normalization condition in the next theorem.

\begin{theorem}\label{th:true only for drury arveson} Suppose that $K$ is normalized at $\lambda_0$, and the image of $\bar\beta$ is not a set of uniqueness for $\DD(\bbB)$. Then there exists a unitary $U= \left(
\begin{smallmatrix}
\bm a&\bm b\\ \bm c& \bm d
\end{smallmatrix}
\right):\bbC\oplus \BB\to \BB\oplus \bbC$ such that, if $G$ is defined by~\eqref{eq:representation of G}, then $\bm a_\eta \bm a_{\eta'}= \bm a_{\eta'} \bm a_\eta$ for all $\eta,\eta'\in \BB$, but $\bmB^*(\MM_G^\sharp)\not\subset \BB\otimes \MM_G^\sharp$. 
\end{theorem}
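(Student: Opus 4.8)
The plan is to exhibit an explicit scalar example where the multiplier $G$ becomes the identity (or a constant) precisely because the base point structure degenerates, and the fact that $\bar\beta$ is not a uniqueness set leaves room for $\MM_G^\sharp$ to be nontrivial while $\bm a$ is $1$-dimensional and hence trivially satisfies the commutation $\bm a_\eta\bm a_{\eta'}=\bm a_{\eta'}\bm a_\eta$ (since $\XX=\bbC$, all operators on $\XX$ commute). The point of the hypothesis ``image of $\bar\beta$ is not a uniqueness set'' is exactly that the embedding $\epsilon_K$ of Theorem~\ref{th:embedding} fails to be unitary, so $\DD(\bbB)$ contains a nonzero function $f_0$ vanishing on $\bar\beta(\Lambda)$; transplanted appropriately this produces an element in $\MM_G^\sharp$ not sent into $\BB\otimes\MM_G^\sharp$ by $\bmB^*$.

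First I would set $\XX=\bbC$, $\GG=\bbC$, and choose the coisometry (here a unitary) $U=\left(\begin{smallmatrix}\bm a&\bm b\\ \bm c&\bm d\end{smallmatrix}\right):\bbC\oplus\BB\to\BB\oplus\bbC$. Since $\XX=\bbC$, $\bm a:\bbC\to\BB$ is essentially a vector in $\BB$; writing $\bm a(1)=v\in\BB$, the operator $Z_\XX(\lambda)\bm a$ acts on $\bbC$ as multiplication by $B(\lambda)v=\<v,\beta(\lambda)\>$, so $(I-Z_\XX(\lambda)\bm a)^{-1}$ is multiplication by $(1-\<v,\beta(\lambda)\>)^{-1}$, and $G(\lambda)$ by~\eqref{eq:representation of G} becomes the scalar multiplier $G(\lambda)=\bm d+\bm c(1-\<v,\beta(\lambda)\>)^{-1}\<v,\beta(\lambda)\>$ (with $\bm b:\BB\to\bbC$). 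The commutation requirement $\bm a_\eta\bm a_{\eta'}=\bm a_{\eta'}\bm a_\eta$ is automatic because $\bm a_\eta=(L^\XX_\eta)^*\bm a$ is an operator on the one-dimensional space $\XX=\bbC$. So the content is entirely in showing $\bmB^*(\MM_G^\sharp)\not\subset\BB\otimes\MM_G^\sharp$; here I would use Corollary~\ref{co:M_Gsharp} and~\eqref{eq:formula for gamma}, which identify $\MM_G^\sharp=\RR_\gamma$ with $\gamma(\lambda)=\delta(\lambda)\bm c(I-Z_\XX(\lambda)\bm a)^{-1}=\bm c(1-\<v,\beta(\lambda)\>)^{-1}$ (using $\delta\equiv 1$ by normalization). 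Thus $\MM_G^\sharp$ is the one-dimensional space spanned by the single function $g(\lambda)=(1-\<v,\beta(\lambda)\>)^{-1}$ (taking $\bm c\neq 0$), while $\BB\otimes\MM_G^\sharp$ consists of functions $\lambda\mapsto w\, g(\lambda)$ with $w\in\BB$ fixed.

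The key computation then is $\bmB^*$ applied to $g$: by the analogue of~\eqref{eq:B*(k_lambda)}, $\bmB^*$ sends a function $h$ on $\Lambda$ to an element of $\BB\otimes\HK$ whose value at $\lambda$ (paired against $\xi\in\BB$) picks out the ``$\beta$-derivative''; concretely, from $\bmB^*(k_\mu)=\beta(\mu)\otimes k_\mu$ one computes that for $g=g(\cdot)$, the element $\bmB^* g$ evaluated as a $\BB$-valued function at $\lambda$ is $\beta(\lambda)\,g(\lambda)\cdot(\text{something})$ — but the honest statement is that $\bmB^* g \in\BB\otimes\HK$ need not be a \emph{simple tensor}, and that is the crux. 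The obstruction to $\bmB^* g\in\BB\otimes\MM_G^\sharp=\BB\otimes\bbC g$ is precisely that $\bmB^* g$ is not of the form $w\otimes g$ for a fixed $w\in\BB$; and here is where the non-uniqueness hypothesis enters — if $\bar\beta(\Lambda)$ fails to determine functions in $\DD(\bbB)$, one can choose $v$ (equivalently, the coisometry $U$) so that the ``tail'' of $\bmB^* g$ that would be required to lie in $\bbC g$ does not. I would make this precise by computing, for $\xi\in\BB$, the function $\lambda\mapsto\<(\bmB^* g)(\lambda),\xi\>$ and checking it is not a scalar multiple of $g(\lambda)=(1-\<v,\beta(\lambda)\>)^{-1}$ for a suitable choice of $v\neq 0$; e.g. one gets something like $\<\beta(\lambda),\xi\>(1-\<v,\beta(\lambda)\>)^{-1}$, and $\<\beta(\lambda),\xi\>$ is non-constant on $\Lambda$ whenever $\xi$ has nonzero component along the span of $\beta(\Lambda)$.

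The main obstacle I anticipate is making the last step \emph{quantitative and honest}: one must actually produce the unitary $U$ (its entries $\bm a,\bm b,\bm c,\bm d$ satisfying the $2\times 2$ unitarity relations with $\GG=\XX=\bbC$) rather than just prescribing $v$, and one must verify that with this $U$ the resulting multiplier $G$ is genuinely contractive (it is, automatically, by the state-space realization~\eqref{eq:representation of G}) and that $\MM_G^\sharp$ is as computed — in particular that $\RR_\gamma$ is honestly one-dimensional and that $\bmB^*$ maps its generator outside $\BB\otimes\RR_\gamma$. The cleanest route is probably to pick the simplest possible non-uniqueness situation (e.g. $\Lambda$ finite, or $\beta$ with small-dimensional range but image not a uniqueness set for $\DD(\bbB)$, which happens as soon as $\dim\HK<\dim\DD(\bbB)$), compute everything by hand there, and then remark that the general non-uniqueness hypothesis reduces to this by restriction. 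The non-uniqueness is used \emph{only} to guarantee that $\DD(\bbB)$ is strictly larger than $\epsilon_K(\HK)$, which is what gives the slack needed to violate~\eqref{eq:inclusion with B}; over the Drury--Arveson space itself ($\bar\beta=\mathrm{id}$, a uniqueness set) Proposition~\ref{pr:abelian case in [BBF]} shows no such violation is possible, so the two theorems are genuinely complementary.
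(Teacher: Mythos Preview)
Your setup is exactly right: take $\XX=\GG=\bbC$, so that $\bm a(1)=v\in\BB$, the commutation $\bm a_\eta\bm a_{\eta'}=\bm a_{\eta'}\bm a_\eta$ is automatic, and $\MM_G^\sharp$ is the one-dimensional span of $g(\lambda)=(1-\<v,\beta(\lambda)\>)^{-1}$ (up to the scalar $\bm c$). The paper does precisely this.

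The gap is in your treatment of $\bmB^* g$. You write that $\lambda\mapsto\<(\bmB^* g)(\lambda),\xi\>$ is ``something like $\<\beta(\lambda),\xi\>(1-\<v,\beta(\lambda)\>)^{-1}$''. This is wrong: the formula $M_\phi^* h(\lambda)=\overline{\phi(\lambda)}\,h(\lambda)$ holds only when $h$ is a reproducing kernel, and $g$ is not one in general. Worse, if your formula were correct it would apply verbatim to $\DD(\bbB)$ (where $\<\beta(\lambda),\xi\>$ is just as non-constant), contradicting Proposition~\ref{pr:abelian case in [BBF]}. So the ``non-constancy'' heuristic proves too much and cannot be the mechanism. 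The suggestion to ``reduce by restriction'' from a finite $\Lambda$ is also not valid: the statement is about the given Pick kernel, and restricting $\Lambda$ changes both $\HK$ and $\bmB$.

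What is missing is the specific choice of $v$ and the use of the embedding $\epsilon_K$. The non-uniqueness hypothesis is used to pick $v=\xi\in\BB$ with $\ddd_{J\xi}\notin\epsilon_K(\HK)$ (the closed span of $\ddd_{\bar\beta(\lambda)}$). One then argues by contradiction: if $\bmB^* g\in\BB\otimes\bbC g$, say $\bmB^* g=\eta\otimes g$, a direct computation pairing against $\bmB\bmB^* k_\lambda=k_\lambda-\pi k_\lambda$ forces $\eta=\xi$, and then $g$ is a common eigenvector for all the operators $\ell_\zeta^*\bmB^*=M_{\psi_{J\zeta}\circ\bar\beta}^*$. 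Transporting via Lemma~\ref{le:M*phi J=J M*phi o b}, $\epsilon_K g$ is a common eigenvector for all $M_{\psi_{J\zeta}}^*$ in $\DD(\bbB)$; but such eigenvectors are exactly the Drury--Arveson kernels $\ddd_\chi$. Since one checks directly that $g$ is a scalar multiple of $\epsilon_K^*\ddd_{J\xi}$, this forces $\epsilon_K\epsilon_K^*\ddd_{J\xi}$ to be a multiple of some $\ddd_\chi$, and a short computation gives $\chi=J\xi$, hence $\ddd_{J\xi}\in\epsilon_K(\HK)$ --- the desired contradiction. The embedding and the characterization of eigenvectors in $\DD(\bbB)$ are the ingredients your sketch does not supply.
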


\begin{proof} As seen in the statement, we intend to define $\XX=\GG=\bbC$.
Since the image of $\bar\beta$ is not a set of uniqueness for $\DD(\bbB)$, the closed linear span of $\ddd_\chi$ with $\chi$ in the image of $\bar\beta$ is not the whole $\DD(\bbB)$.  Take a vector $\xi\in\BB$, $\xi\not=0$, such that $\ddd_{J\xi}$ is not in this span. Define 
\[
 \bm a(1)=\xi, \quad \bm c(1)=\sqrt{1-\|\xi\|^2}
\]
and complete $U$ to be a unitary as required.

 The commutativity relation is obviously satisfied, since the operators $a_\eta$ act on the space $\XX$ of dimension~1. Also, since $\dim \XX=1$, we have $Z_\XX(\lambda)=B(\lambda)$, and $\gamma$ is defined by $f_0=\gamma(1)$, which is a function in $\HK$, namely 
\[
f_0(\lambda)=\sqrt{1-\|\xi\|^2} \frac{1}{1-B(\lambda)\bm a}=
 \frac{\sqrt{1-\|\xi\|^2}}{1-\< \xi, \beta(\lambda)\> }.
\]
The space $\MM_G^\sharp$ is the one-dimensional space spanned by~$f_0$.

Suppose that $\bmB^*(\MM_G^\sharp)\subset \BB\otimes \MM_G^\sharp$; this would mean that for some vector $\eta\in \BB$ we have $\bmB^* f_0=\eta\otimes f_0$. Then, for any $\lambda\in\Lambda$,
\[
\begin{split}
\< \eta,\beta(\lambda) \> f_0(\lambda)&
= \< \eta\otimes f_0,\beta(\lambda)\otimes k_\lambda \>
=\< \bmB^* f_0,\beta(\lambda)\otimes k_\lambda \>
= \< f_0,\bmB (\beta(\lambda)\otimes k_\lambda) \>\\
&= \< f_0,\bmB \bmB^* k_\lambda \>
= \< f_0,k_\lambda-\pi k_\lambda \>
=f_0(\lambda)- f_0(\lambda_0),
\end{split}
\]
and therefore 
\[
f_0(\lambda) = \frac{f_0(\lambda_0)}{1-\< \eta, \beta(\lambda)\> }
=\frac{\sqrt{1-\|\xi\|^2}}{1-\< \eta, \beta(\lambda)\> },
\]
where we have used $\beta(\lambda_0)=0$ (as a consequence of the normalization assumption).
Since the family $\beta(\lambda)$, $\lambda\in\Lambda$ is total, it follows that $\eta=\xi$.

For any $\zeta\in \BB$, denote as above $\ell_\zeta:= L_\zeta^{\HK\otimes\GG}$.
Then
\[
\ell_\zeta^*\bmB^* f_0=\ell_\zeta^* (\xi\otimes f_0)= \< \xi,\zeta \>f_0.
\]
and thus $f_0$ is an eigenvector for all operators $\ell_\zeta^* \bmB^*$, $\zeta\in \BB$, of eigenvalue $\< \xi,\zeta \>$. On the other hand, if $\lambda\in\Lambda$, then
\[
\ell_\zeta^*\bmB^*k_\lambda=\ell_\zeta^* (\beta(\lambda)\otimes k_\lambda) = \< \beta(\lambda),\zeta \> k_\lambda =
\overline{(\psi_{J\zeta}\circ \bar\beta)(\lambda)}k_\lambda,
\]
(remember that $\psi_{J\zeta}(\eta)=\<\eta, J\zeta\>$).

By Lemma~\ref{le:M*phi J=J M*phi o b}, it follows that $\epsilon_Kf_0$ is an eigenvector for all adjoints of the multipliers  $p_{J\zeta}$ (defined at the end of Section~\ref{se:pick spaces}),  and therefore $\epsilon_Kf_0$ is a multiple of $\ddd_\chi$ for some $\chi\in\BB$.  Since $f_0$ is a multiple of $\epsilon_K^*\ddd_{J\xi}$, we have $\epsilon_K\epsilon_K^*\ddd_{J\xi}=\alpha\ddd_\chi$  for some $\alpha\in\bbC$, whence $\epsilon_K^*\ddd_{J\xi}=\epsilon_K^*\alpha\ddd_\chi$, or $\ddd_{J\xi}\circ\bar\beta=\alpha\ddd_\chi\circ\bar\beta$. So
\[
\frac{1}{1- \< \bar\beta(\lambda),J\xi \>}= \frac{\alpha}{1- \< \bar\beta(\lambda),\chi \>}
\]
for all $\lambda\in\Lambda$. Taking $\lambda=\lambda_0$ yields $\alpha=1$; then using again the fact that the family $\bar\beta(\lambda)$, $\lambda\in\Lambda$, is total, it follows that $\chi=J\xi$.

Therefore $\ddd_{J\xi}=\epsilon_Kf_0$ belongs to the range of $\epsilon_K$. This contradicts the assumption, since this range is the linear span of $\ddd_\chi$ with $\chi$ in the image of $\bar\beta$. Therefore $\bmB^*(\MM_G^\sharp)\not\subset \BB\otimes \MM_G^\sharp$.
\end{proof}



%
%
%
%
%
%
%

\end{document}